\documentclass [12pt,a4paper,reqno]{amsart}
\usepackage{amssymb}
\usepackage{verbatim}
\usepackage{pgf}
\usepackage{tikz}
\usepackage{needspace}
\usepackage{longtable}
\usepackage[a4paper,margin=1in,includehead,includefoot]{geometry}
\usepackage{nccmath}
\usepackage{enumerate}

\usepackage{amsthm}
\usepackage{amsmath}
\usepackage{stmaryrd}
\usepackage{stackrel}
\usepackage{setspace}
\PassOptionsToPackage{normalem}{ulem}
\usepackage{ulem}
\usepackage{youngtab}
\makeatletter


\numberwithin{figure}{section}
\numberwithin{equation}{section}
\numberwithin{table}{section}


\input amssymb.sty
\usepackage{etoolbox}
\patchcmd{\thebibliography}{\section*}{\section}{}{}

\usepackage{algpseudocode} \usepackage{algorithm}
\usepackage{algpseudocode,algorithm}
\usepackage{lipsum}
\usepackage{caption}
\usepackage{graphics}

\usepackage{amsmath}
\usepackage{amssymb}
\usepackage[all]{xy}

\usepackage{epsfig}\usepackage{youngtab}
	
	\newcommand{\ef}{\end{equation}}
\chardef\bslash=`\\ 





\hfuzz1pc 

\newdir{:=}{{}}
\newcommand*\colvec[3][]{
	\begin{pmatrix}\ifx\relax#1\relax\else#1\\\fi#2\\#3\end{pmatrix}
}

\newtheorem*{thm*}{Theorem}

\newtheorem{lem}{Lemma}[section]
\newtheorem*{lem*}{Lemma}

\newtheorem*{corl*}{Corollary}

\newtheorem{prop}{Proposition}[section]
\newtheorem{prop*}{Proposition}

\theoremstyle{definition}
\newtheorem{defn}{Definition}[section]
\newtheorem{examp}{Example}

\newtheorem*{examp*}{Example}

\newtheorem*{cor*}{Corollary}
\newtheorem*{remark*}{Remark}
\newtheorem*{CC*}{Crossover Conjecture}
\newtheorem*{Note*}{Note}
\newtheorem*{defn*}{Definition}

\theoremstyle{remark}
\newtheorem{remark}{Remark}[section]



\renewcommand{\sectionmark}[1]{}

\newcommand{\la}{\langle}
\newcommand{\ra}{\rangle}

\newcommand{\defect}{\operatorname{def}}
\newcommand{\hub}{\operatorname{hub}}
\newcommand{\cont}{\operatorname{cont}}

\renewcommand{\a}{\alpha}

\newcommand{\Inv}{\operatorname{Inv}}

\begin{document}
	
		\title[Faces in Crystals]{Faces in crystals of affine type $A$ and the shape of canonical basis elements}
		
		\author{Ola Amara-Omari}
		\address{Department of Mathematics\\
			Bar-Ilan University, Ramat-Gan, Israel}
		\email{olaomari77@hotmail.com}
		
		\author{Ronit Mansour}
		\address{Gordon College of Education 3570503}
		\email{ronitmansour77@gmail.com}
		
		\author{Mary Schaps}
		\address{Department of Mathematics\\
			Bar-Ilan University, Ramat-Gan\\
			Israel}
		\email{mschaps@math.biu.ac.il}
		
		\thanks{Partially supported by the Ministry of Science and Technology fellowship, at Bar-Ilan University, Ramat-Gan, Israel. Some of the results appear in the Ph.D. thesis of the first author.}
		\subjclass{17B10, 05E10, 	`17B37}
		\keywords{Canonical basis elements, e-regular multipartitions, algebraic combinatorics}
		

		\begin{abstract} For a dominant integral weight $\Lambda$ in a Lie algebra of affine type A and rank $e$, and an interval $I_0$ in the residue set $I$, we define the face for the interval $I_0$ to be the subgraph   of the block-reduced crystal $\widehat P(\Lambda)$ that is generated by  $f_i$ for $i \in I_0$.  We show that such a face has an automorphism that preserves defects.
			For an interval of length $2$, we also give a non-recursive construction of the 
			$e$-regular multipartitions with weights in the face, as well as a formula for the number of $e$-regular multipartitions at each vertex of the face.

			For an affine Lie algebra of type $A$ we define and investigate the shape of canonical basis elements, a sequence counting the number of multipartitions with a given coefficient. For finite faces generated by $\Lambda$ with $|I_0|=1,2$, we give a non-recursive closed formula for the canonical basis elements.
		\end{abstract}
		
		\maketitle
		
		\section{INTRODUCTION}
		
		The highest weight module $V(\Lambda)$ for a Lie algebra of affine type A has weight spaces whose elements can be represented either by multipartitions or by canonical basis elements, and we will consider both representations.  The weight spaces of $V(\Lambda)$ for rank $e$ and affine type A have, as representatives, multipartitions that are called  $e$-regular. When we restrict to a face, there are various symmetries among the $e$-regular multipartitions belonging to the face, which we will investigate.  When $I_0=\{1,2\}$ and $\Lambda = a_0\Lambda_0+a_1\Lambda_1+\dots+a_\ell\Lambda_\ell$, we will find paths $2^{u}1^{j_1}2^{j_2-u}$ leading to weights with content $(0,j_1,j_2,0,\dots,0)$ of the face and we will give a complete description of the $e$-regular multipartitions as well as a formula for the number of $e$-regular multipartitions depending only on $a_1,a_2,j_1,j_2$.
		
		We define the shape of a canonical basis element and give an algorithm for constructing the canonical basis element for the case $\lvert I_0 \rvert \leq 2$ which does not require the construction of canonical basis elements for other vertices of the crystal. 
		In an earlier paper on the highest weight modules of an affine Lie algebra \cite{AOS3}, we showed that for any dominant integral weight $\Lambda$ and any non-negative integer $d$ there is a degree $N(d)$ after which every weight of defect $d$ has a path determined by Weyl group reflection with an endpoint of degree  $\leq N(d)$ and such that every intermediate stage of the path is a Weyl group reflection of endpoints.  This had a consequence in \cite{AOS2} regarding the efficient calculation of canonical basis elements. In this paper, we improve the results for canonical basis elements, working with general rank $e$. 
		
		\section{DEFINITIONS AND NOTATION}
		
		Let $\mathfrak g$ be the affine Lie algebra $A^{(1)}_{\ell}$, untwisted of affine type $A$, where $\ell =e-1$ with a Cartan matrix given in the case $e=2$ by
		\[
		C=
		\begin{bmatrix}
			2 &-2\\
			-2&2
		\end{bmatrix}
		\]

		For the general case $e>2$, we have
		\[
		C=
		\begin{bmatrix}
			2&-1&0&\dots&0&-1\\
			-1&2&-1&0&\dots&0\\
			0&-1&2&-1&\dots&0\\
			\dots&\dots&\dots&\dots&\dots&\dots\\
			\dots&\dots&\dots&\dots&\dots&\dots\\
			0&\dots&0&-1&2&-1\\
			-1&0&\dots&0&-1&2
		\end{bmatrix}.
		\]
		
		\subsection{Affine Lie algebras}
		The Cartan subalgebra of the Lie algebra $\mathfrak g$ is generated by \textit{coroots} 
		$h_0, h_1, \dots, h_\ell$ and an element $d$ called the \textit{scaling element},
		which satisfies 
		\[
		\langle d, \alpha_i \rangle =0, 1 \leq i \leq \ell, \langle d,\alpha_0 \rangle =1.
		\]
		
		\noindent The center of  $\mathfrak g$ is one-dimensional, spanned by 
		\[
		c = \sum_{i=0}^\ell h_i,
		\]
		which is called the \textit{canonical central element}.
		The \textit{simple roots} $\alpha_0,\dots, \alpha_\ell$ act on the Cartan subalgebra with an action determined by the Cartan matrix, $\langle h_i,\alpha_j \rangle = a_{ij}$. 
		
		Let $Q$ be the $\mathbb{Z}$-module generated by the simple roots.
		Let  $Q_+$ be the subset of $Q$ in which all coefficients are non-negative.
		The \textit{null root} $\delta$ is $\a_0+\dots+\a_{\ell}$. We define the fundamental weights $\Lambda_j, 0 \leq j \leq \ell$ together with the null root to be weights dual to the coroots and the scaling element, so chosen that $ \langle h_i, \delta \rangle = 0$ for all $i$, $\langle d, \delta \rangle=1$, $ \langle d, \Lambda_j \rangle = 0$ for all $j$, and
		$ \langle h_i, \Lambda_j \rangle = \delta_{ij}$, where $\delta_{ij}$ is the Kronecker delta. 
		
		The $\mathbb Z$-lattice $P$ of weights of the affine Lie algebra spans a real vector space $P_\mathbb{R}$ which has two different bases, one given by the fundamental weights together with the null root, $\Lambda_0,\dots, \Lambda_{\ell}, \delta$, 
		and one is given by $\Lambda_0, \a_0,\dots,\a_{\ell}$.  We will use the first basis for our weights.

		Let $\Lambda$ be a fixed non-zero dominant integral weight, which is a sum of fundamental weights with non-negative integer coefficients. Let $V(\Lambda)$ be a highest weight module with 
		highest weight $\Lambda$, and let $P(\Lambda)$ be the set of weights of $V(\Lambda)$. In affine type A, if $\Lambda=a_0\Lambda_0+a_1\Lambda_1+\dots a_\ell\Lambda_\ell$ for non-negative integers $a_i$, we call $r=a_0+a_1+ \dots +a_\ell$ the \textit{level}.

		We denote by $(\cdot \mid \cdot)$ the symmetric product on the weights, which can be extended by linearity to $P_\mathbb{R}$. The values of this symmetric product of interest to us will be $(\a_i\mid\a_i)=2$, $(\a_i\mid\a_{i\pm 1})=-1$, and in general, $( \a_i \mid \a_j)=a_{ij}$. Also $(\Lambda_i \mid \a_j)=\delta_{ij}$ and  $(\a_i\mid \delta)=0$, for all
		$i,j=0,1,\dots,\ell$. In addition, $(\Lambda_0\,\mid \Lambda_0 )=(\delta \mid \delta) =0$ and $(\Lambda \mid \delta )=r$, the level defined in the previous paragraph.

		\begin{defn}\label{deg} Let $\lambda = \Lambda - \alpha$ with $\alpha \in Q_+$. If $\alpha=\sum_{i=0}^\ell c_i \alpha_i$, where all $c_i$ are non-negative, then the vector 
			\[
			\cont(\lambda)=(c_0.\dots,c_\ell)
			\]
			is called the \textit{content} of $\lambda$. The \textit{degree} $\deg(\lambda)$ of the weight $\lambda$ is the integer $n=\sum_{i=0}^\ell c_i$.
		\end{defn}

		As in [\cite{Kl},\S3.3], we define the defect of a weight $\lambda=\Lambda -\alpha$ by 
		
		\[
		\defect(\lambda)=\frac{1}{2}((\Lambda \mid \Lambda)-  (\lambda \mid \lambda))=(\Lambda \mid \alpha)-\frac{1}{2}(\alpha \mid \alpha).
		\]
		
		\noindent The defect is an integer for the algebras of untwisted affine type $A$ [\cite{Ka}, Prop. 11.4].
		The weights of defect $0$ are those lying in the Weyl group orbit of $\Lambda$. The defect is invariant under the action of the Weyl group.
		
		Every weight $\lambda \in P(\Lambda)$ has the form $\Lambda-\alpha$, for $\alpha \in Q_+$. We will need
		\begin{align*}
			\defect(\lambda-t \delta)=&(\Lambda \mid \alpha+t\delta)-\frac{1}{2}(\alpha +t\delta \mid \alpha +t\delta)\\
			=&\defect(\lambda)+tr.
		\end{align*}

		Define
		\[
		\max (\Lambda)=\{\lambda \in P(\Lambda) \mid \lambda + \delta \not\in P(\Lambda)\},
		\] 
		and by [\cite{Ka}, \S 12.6.1], every element of 
		$P(\Lambda)$ is of the form $\{y-t\delta \mid y \in \max(\Lambda), t \in \mathbb Z_{ \geq 0}\}$. In \cite{BFS} it was shown that there are a finite number of defects of weights in $\max(\Lambda)$, to which all other defects are congruent $\mod r$. 
		Let $W$ denote the Weyl group
		\[
		W=T \rtimes  \mathring{W},
		\] 
		expressed as a semidirect product of 
		a normal abelian subgroup $T$ by the finite Weyl group $\mathring{W}$ given by crossing out the first row and column of the Cartan matrix. [\cite{Ka}, \S 
		6.5.2].

		\begin{defn}\label{hub} For any weight $\lambda$ in the set of weights  $P(\Lambda)$ for a dominant integral weight $\Lambda$, we let $\hub(\lambda)=[\theta_0,\dots, \theta_{\ell}]$ be the \textit{hub} of $\lambda$, where 
			\[
			\theta_i=\la h_i, \lambda \ra.
			\]
			\noindent The hub is the projection of the weight of $\lambda$ onto the subspace of the real weight space generated by the fundamental weights. We write the hub with square brackets to distinguish the hub from the content. The original definition given by Fayers in \cite{Fa} was the negative of this one.
		\end{defn}

		\subsection{Multipartitions}
		
		A partition $\lambda=(\lambda_1,\lambda_2,\dots,\lambda_t)$ is a sequence of integers with $\lambda_1 \geq \lambda_2 \geq \dots \geq \lambda_t$ of length $\ell(\lambda)=t$, and a multipartition is a sequence of partitions.
		
		Let $I$ be the set of residues $\mathbb{Z}/e\mathbb{Z}=\{0,1,\dots,\ell\}$. Choose a sequence of residues  $s=(k_1,\dots,k_r)$, called a \textit{multicharge}, such that $\Lambda=\Lambda_{k_1}+\dots+\Lambda_{k_r}$.  In affine type $A$, the number $r$ of terms in $s$ is called the \textit{level}.
		
		The multicharge determines residues attached to the nodes, as follows.
		For a multipartition $\lambda$ with Young diagram $Y(\lambda)$, the node $(t,u)$ in partition $\ell$ is given residue
		\[
		k_\ell+u-t \mod e.
		\]
		Using these residues, we can define an action of the quantum enveloping algebra
		$\mathcal U = U_v(\widehat{\mathfrak{sl}}(e)) $.
		An addable $i$-node  $\mathfrak n$ is a node of residue $i$  outside $Y(\lambda)$ such that, if added, it would give a multipartition, which we denote by  $\lambda^{\mathfrak n}$.
		A removable $i$-node $\mathfrak m$ inside a multipartition $\mu$ is a node of residue $i$ at the end of a row or column which, if removed,  would give a multipartition, which we denote by  $\mu_{\mathfrak m}$.

		We now define two operators on the multipartitions.
		Our exposition will generally follow that in [\cite{Kl}, \S 3], except for a duality issue which we will explain later.  For any given residue $i$, we denote the addable $i$-nodes, as defined in Def. \ref{Fock}, by a ``$+$'', and the $i$-removable nodes by a ``$-$''. We then write from left to right all the pluses and minuses from the bottom to the top, remove any cases of ``$-+$'', and call the remaining sequence of plus and minus signs the \textit{signature}.  The first removable $i$-node from the left, if such exists, is called \textit{$i$-good},  and the first addable $i$-node from the right, if such exists, is called \textit{$i$-cogood}.
		
		We define an operation of $\tilde e_i$ which is then the removal of the $i$-good node if it exists and otherwise gives $0$, and an operation  $\tilde f_i$, which is the addition of the $i$-cogood node if it exists and otherwise gives $0$.

		The set of all multipartitions obtained by
		acting on the empty multipartition by various $\tilde f_i$, will be called the \textit{$e$-regular multipartitions}. The $e$-regular multipartitions are in one-to-one correspondence with basis elements of a highest weight module $V(\Lambda)$ \cite{Kl}, and the operations $\tilde e_i$ and $\tilde f_i$ correspond to the action of the Chevalley basis elements $e_i$ and $f_i$ of the Lie algebra $\mathfrak g$ on the module $V(\Lambda)$.

		When $r>1$,  there are multipartitions in which every individual partition is $e$-regular but the multipartition as a whole cannot be obtained by this recursive procedure. For example, when $e=3$, and the multicharge is $(0,0,2)$, the multipartition $[\emptyset,(1),\emptyset]$ is not $e$-regular although none of the individual partitions has two identical rows.
		
		There is an analogous construction, usually preferred by Brundan and Kleshchev \cite{BK}[\cite{Kl},\S3.4], which produces multipartitions in which every partition is $e$-restricted, which means that there are no $e$ consecutive columns which are equal.

		\subsection{The block reduced crystal}

		We operate on the set $P(\Lambda)$ of weights of $V(\Lambda)$, using operators $e_i$ and $f	_i$. For a weight $\psi$ in $P(\Lambda)$, we define $f_i \cdot \psi=\psi-\alpha_i$ if the new weight lies in $P(\Lambda)$ and $0$ otherwise, while $e_i$ is defined using $e_i \cdot \psi$ equal to $\psi+\alpha_i$ or $0$.
		\begin{defn}\label{string}
			For any weight $\psi$, the set of weights $\dots, e_i^2(\psi), e_i(\psi),\psi, f_i(\psi), f_i^2(\psi),\dots$ will be called the $i$-\textit{string} of $\psi$.
		\end{defn}
		
		\begin{defn}\label{3.1}
			The set $P(\Lambda)$ can be taken as the set of vertices of a graph $\widehat P(\Lambda)$ which we will call the \textit{block-reduced crystal}. Two vertices will be connected by an edge of residue $i$ if there are two basis elements with those weights connected by $\tilde e_i$ or $\tilde f_i$. A finite set of vertices connected by edges of residue $i$ will be called an $i$-string and all $i$-strings are finite. When convenient, we will identify $\widehat P(\Lambda)$ with its \textit{realization} in $P_\mathbb{R}$, where vertices are represented by points in the real vector space and edges by line segments connecting them.
		\end{defn}

		\subsection{Canonical basis elements}
		
		Our quantum enveloping algebra will be $\mathcal U=  U_v(\widehat{\mathfrak{sl}}(2))$,
		where we are using balanced quantum integers $ [n]_v = v^{n-1}+v^{n-3}+\dots+v^{-(n-3)}+v^{-(n-1)}$.  The quantum factorial is $ [n]_v!= [n]_v \cdot  [n-1]_v \cdot \dots \cdot [1]_v$. The underlying ring of the enveloping algebra is $\mathbb Q(v)$, and the generators are $e_i, f_i, v^{h_i}$  for $i \in I=\mathbb Z/e\mathbb Z$ and a central element $v^d$.

		\begin{defn}\label{Fock} The Fock space $\mathcal F^s$ for multicharge $s$  is a vector space over the field  $\mathbb Q(v)$ with a natural basis corresponding to multipartitions consisting of $r$ partitions, where $\lvert \mu \rangle$ is the natural basis element corresponding to the multipartition $\mu$, and the empty multipartition is represented by $\lvert ~\rangle$.  
			
			The quantum enveloping algebra acts on the Fock space by determining actions for $e_i, f_i, v^{h_i}$ and $v^d$  where $i \in I=\mathbb Z/e\mathbb Z$, as follows:
			\begin{itemize}
				\item  For an $i$-addable node $\mathfrak n$, let us n define  $N(\mathfrak n,i)=\#\{$   addable  $i$-nodes above~$\mathfrak n\}-\# \{$ removable $i$-nodes above $\mathfrak n \}$ and set
				\[
				f_i(\lvert\lambda\rangle)=\sum_{\mathfrak n}v^{N(\mathfrak n,i)}\lvert\lambda^{\mathfrak n}\rangle.
				\]
				
				\item  For an $i$-removable node $\mathfrak m$, let us 
				define $M(\mathfrak m,i)=\#\{$ addable $i$-nodes below~$\mathfrak m\}-  \# \{$ removable $i$-nodes below $\mathfrak m\}$.
				\[
				e_i(\lvert\mu\rangle)=\sum_{\mathfrak m}v^{M(\mathfrak m,i)}\lvert\mu_{\mathfrak m}\rangle.
				\]
				\item Letting $N(i)=\#\{$   addable  $i$-nodes $\}-\# \{$ removable $i$-nodes $\}$, we let $v^{h_i}$ act on an element $\lvert\mu\rangle$  of the natural basis by multiplication by $v^{N(i)}$, giving $v^{N(i)}\lvert \mu\rangle$.
				\item Letting $N_0$ be the number of $0$-nodes in $\mu$, $v^d\lvert \mu \rangle=v^{N_0}\lvert \mu \rangle.$
			\end{itemize}
		\end{defn}
		
		\begin{defn}\label{presig}
			The operator we get by dividing $e_i^k$ or $f_i^k$ by the quantum factorial $[k]_v!$ is called the divided power and will be  represented by
			$e_i^{(k)}$ and $f_i^{(k)}$.

			We define $\mathcal F^s_{\mathcal A}$ to be the subalgebra of $\mathcal F^s$ generated from $\lvert~\rangle$ by the divided powers $f_i^{(k)}$, with coefficients
			in the algebra $\mathcal A$ of Laurent polynomials in $v$ with integral coefficients.
		\end{defn}
		
		There is an involution of the quantum enveloping algebra called the bar-involution which fixes $e_i$, $f_i$, sends $v^{h_i}$ to $v^{-h_i}$, $v^{d}$ to $v^{-d}$, and interchanges $v$ and $v^{-1}$. For each $e$-regular partition $\mu$, there is an element $G(\mu)$ of the Fock space $\mathcal F^s_{\mathcal A}$ that is invariant under the bar involution and is congruent to the natural basis element of $\lvert \mu \rangle$ modulo $v$.   These are called canonical basis elements. An algorithm for constructing the $G(\mu)$ recursively in the case of partitions was given originally by  Lascoux, Leclerc, and Thibon in \cite{LLT}, and later extended to multipartitions by Fayers in \cite{Fa2}.
		
		As mentioned before, we will usually follow Mathas in \cite{M1} in assuming that all partitions with the same corner residue will lie in an interval in the multipartition, and we will also take them in increasing order, $\Lambda=a_0\Lambda_0+a_1\Lambda_1+\dots+a_{e-1}\Lambda_{e-1}$.  The partitions with $i$ in the upper left corner will be called \textit{$i$-corner partitions}.
		
		\begin{defn}\label{dom}
			The \textit{dominance order} on multipartitions is given by
			$\mu \trianglerighteq \lambda$ if, for all  integers $k$ with $1 \leq k \leq r$ and $j \leq \ell(\mu^k)$,
			\[
			\sum_{\ell=1}^{k-1}\mid \mu^\ell \mid + \sum_{i=1}^j \mu^k_i \geq \sum_{\ell=1}^{k-1}\mid \lambda^\ell \mid + \sum_{i=1}^j \lambda^k_i.
			\]	
		\end{defn}

		\begin{defn}
			For an $e$-regular multipartition $\mu$, we can write the canonical basis element $G(\mu)$ in the natural basis.  If $\lvert \lambda \rangle$ is a natural basis element with non-zero coefficient, then $\mu$ and $\lambda$ have the same content.  Furthermore, in the dominance relation, Def. \ref{dom}, $\mu \trianglerighteq \lambda$.
		\end{defn}

		\section{SLICES AND FACES}
		
		Let $I_0$ be a proper subinterval of the set of residues $I = \{0,1,\dots,\ell\} \subset \mathbb Z/e \mathbb Z$. 
		\begin{defn} We give a realization of the graph $\widehat P(\Lambda)$ by embedding it in $P_\mathbb{R}$.
			An $I_0$  \textit{slice} is the intersection of $\widehat P(\Lambda)$ with a linear subspace on which all components of the content for residues outside of $I_0$ are fixed. A \textit{face} is a slice containing the highest weight $\Lambda$ so that $c_j=0$ for $j \notin I_0$. 
		\end{defn}	
		\begin{examp} If $I_0 = \{i\}$, then an $I_0$ slice is an $i$-string	.
			If $I_0=I-\{i\}$, a slice consists of all vertices with a fixed $c_i$. In \cite{AS} this was called a floor.
		\end{examp}

		Every face has an involution in its hub coordinates which takes the lowest degree term, corresponding to $\Lambda$, to the highest degree term, which we will denote by $\rho$, and we will show that the involution preserves defects.
		This is a consequence of the duality of an irreducible module for a finite Lie algebra, as in [\cite{H},\S 10, \S 21]
		or [\cite{B}, Chapter VIII, Section 5, Proposition11(ii)]. 
		Specifically, there is a duality between the highest weight module $V(\lambda)$ for the highest weight and $V(-\sigma \lambda)$ where $\sigma$ is the longest element of the finite Weyl group. The procedure uses the automorphism of the Dynkin diagram. The approach in \cite{H} is much closer to our treatment, but the actual results we need are given as exercises, 10.9 and 21.6.  In our case, the automorphism reverses the order of $\{1,2,\dots,t\}$ giving exactly the element $\rho$ we calculate in the next lemma.  However, since we are dealing with a linear subspace inside an infinite module, we also calculated the components of the hub outside of $I_0=\{i,i+1,\dots,i+t-1\}$.

		For the  calculation of the face, we need to know only the coefficients $a_i,\dots, a_{i+t-1}$ of  
		$\Lambda$. To simplify the notation, we assume $i=1$ and $e \geq  t+2$.  The cyclic reordering of the indices is allowable in type A because of the symmetry of the Cartan matrix.  We surely have $e \geq t+1$ and will treat the case of equality as a consequence of the more general case.
		
		\begin{lem} Suppose $t$ is a positive integer, the rank $e$ of a Kac-Moody algebra of affine type A is $e \geq t+2$, and $I_0= \{1,\dots,t\}$. If 
			$\Lambda=a_0\Lambda_0+a_1\Lambda_1+\dots+a_\ell\Lambda_\ell$ with 
			$[a_0,a_1,\dots,a_\ell]$ and $r'=a_1+\dots+a_t$, then the  
			$I_0$ face has an element $\rho$ with  $\hub(\rho)=[a_0+r',-a_t,\dots,-a_1,a_{t+1} +r',a_{t+2},\dots]$.
		\end{lem}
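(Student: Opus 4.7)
The plan is to identify the face with the weight support of an irreducible finite-type $A_t$ representation, locate its lowest weight $\rho$ via the longest Weyl element, and compute the remaining hub coordinates by a short linear-algebra argument.

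First, since $e \geq t+2$, the sub-Dynkin diagram induced on $I_0 = \{1, \dots, t\}$ is a linear $A_t$ chain that meets the rest of the affine diagram only through the edges $a_{0,1} = a_{t+1,t} = -1$. Let $\mathfrak g_0 \subset \mathfrak g$ be the finite simple subalgebra of type $A_t$ generated by $\{e_j, f_j, h_j : j \in I_0\}$. Since $e_j$ annihilates the highest weight vector $v_\Lambda$ for every $j \in I_0$ and the $\mathfrak g_0$-hub of $\Lambda$ is $(a_1, \dots, a_t)$, the $\mathfrak g_0$-submodule of $V(\Lambda)$ generated by $v_\Lambda$ is the irreducible $A_t$-module of that highest weight, and its weights are exactly the vertices of the face. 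Classical $A_t$ theory then provides a unique lowest weight $\rho = w_0 \Lambda$, where $w_0$ is the longest element of the Weyl group of $\mathfrak g_0$. Since $w_0$ acts on the $\mathfrak g_0$-hub by reversing order and negating, the $I_0$-coordinates of $\hub(\rho)$ are $-a_t, -a_{t-1}, \dots, -a_1$.

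Next, write $\rho = \Lambda - \sum_{j=1}^t c_j \a_j$ with $c_j \in \BZ_{\geq 0}$. For $i \notin I_0$, pairing with $h_i$ gives $\la h_i, \rho \ra = a_i - \sum_j c_j a_{ij}$. The hypothesis $e \geq t+2$ ensures that node $\ell$ is not adjacent to $I_0$ in the affine diagram, so among indices $i \notin I_0$ only $i=0$ contributes (via $a_{0,1}=-1$) and only $i=t+1$ contributes (via $a_{t+1,t}=-1$). This yields $\la h_0, \rho \ra = a_0 + c_1$, $\la h_{t+1}, \rho \ra = a_{t+1} + c_t$, and $\la h_i, \rho \ra = a_i$ for the remaining $i \in \{t+2, \dots, \ell\}$.

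Finally, I must show $c_1 = c_t = r'$. Pairing $\sum_j c_j \a_j = \Lambda - \rho$ with $h_i, i \in I_0$, and using the $I_0$-coordinates of $\hub(\rho)$ already established produces the linear system $\sum_j c_j a_{ij} = a_i + a_{t+1-i}$, where $a_{ij}$ now denotes the $A_t$ Cartan matrix. The right-hand side is invariant under $i \mapsto t+1-i$, and so is the $A_t$ Cartan matrix, forcing $c_j = c_{t+1-j}$; in particular $c_1 = c_t$. Using the standard $A_t$ inverse Cartan matrix entries $(C^{-1})_{ij} = \min(i,j)\bigl(t+1-\max(i,j)\bigr)/(t+1)$, the identity $(C^{-1})_{1,i} + (C^{-1})_{t,i} = 1$ for each $i \in I_0$ gives $c_1 + c_t = \sum_i (a_i + a_{t+1-i}) = 2r'$, whence $c_1 = c_t = r'$. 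This short Cartan-matrix computation is the only real obstacle; the rest is immediate from finite-type representation theory together with the decoupling provided by $e \geq t+2$.
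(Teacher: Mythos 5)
Your argument is correct, but it takes a genuinely different route from the paper. The paper proves the lemma by explicitly constructing a defect-zero path of Weyl reflections inside the face (first $s_1,s_2,\dots,s_t$, then back down $s_{t-1},\dots,s_1$, then induction on the inner interval $\{2,\dots,t-1\}$), tracking the hub at every stage; this has the side benefit of producing the explicit path and hence the multipartition and degree of $\rho$, which the paper uses immediately afterwards. You instead observe that the face is the weight support of the irreducible finite-type $A_t$ module $U(\mathfrak g_0)v_\Lambda$, read off the $I_0$-coordinates of the lowest weight from the action of the longest element $w_0$ (order reversal and negation), and then recover the boundary coordinates $\theta_0,\theta_{t+1}$ by solving $Cc=b$ with the inverse $A_t$ Cartan matrix, getting $c_1=c_t=r'$; the decoupling of $I_0$ from the rest of the affine diagram under $e\geq t+2$ makes the outside components immediate. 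This is shorter and more conceptual, at the price of not exhibiting the path or the multipartition of $\rho$. Two small points to tighten: your sentence that node $\ell$ is not adjacent to $I_0$ fails when $e=t+2$ (there $\ell=t+1$ is adjacent to $t$), though your bookkeeping still assigns that contribution correctly to the index $t+1$, so the conclusion is unaffected; and the identification of the face with the weights of $U(\mathfrak g_0)v_\Lambda$ deserves a one-line PBW justification (any weight $\Lambda-\beta$ with $\beta$ supported on $I_0$ is hit only by monomials in negative root vectors with roots supported in $I_0$), although for the lemma itself you only need the trivial inclusion that weights of the $\mathfrak g_0$-submodule lie in the face.
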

		\begin{proof}
			We have to show that there is a path within the face constructed from Weyl group reflections with $\rho$ as its endpoint. We want to work by induction on $t$.  If $t=1$, the necessary path is simply acting by $s_1$, which is $f_1^{a_1}$, In that case $r'=a_1$ and the new hub is $[a_0+a_1, -a_1,a_1+a_2,a_3,\dots]$. Now suppose $t=2$.  We begin with hub
			$[a_0,a_1,a_2,a_3,\dots]$.  We can act by $s_1$, which is $f_1^{a_1}$, $s_2$ which is $f_2^{a_1+a_2}$, and finally  times $f_1^{a_2}$ again.  The hubs along the path are 
			\begin{align*}
				[&a_0, &a_1, &a_2, &a_3, &a_4, &\dots]\\
				[&a_0+a_1, &-a_1, &a_1+a_2, &a_3, &a_4, &\dots]\\
				[&a_0+a_1, &a_2, &-a_1-a_2, &a_1+a_2+a_3, &a_4, &\dots]\\
				[&a_0+a_1+a_2, &-a_2, &-a_1, &a_1+a_2+a_3, &a_4, &\dots]\\	
			\end{align*}
			We could, equally well, have started with $a_2$ copies of $f_2$, and the result would have been the same.
			
			We now do induction on $t \geq 3$, assuming that the lemma is true for $t-2$. 
			The highest weight element has, as usual, the hub $[a_0,a_1,\dots,a_\ell]$ and 
			defect $0$.  We let $r=a_0+\dots+a_\ell$ be the level, as usual, but now we also define a \textit{local level} $r'=a_1+a_2+\dots+a_t$. We generate a sequence of weights of defect $0$ by  starting with the Weyl group generator $s_1$, followed by 
			$s_2, s_3,\dots, s_t$. The hubs we get are
			\begin{align*}
				[&a_0,&a_1,&a_2, &a_3,&\dots,&a_t,&a_{t+1},&\dots]\\
				[&a_0+a_1,&-a_1, &a_1+a_2,&a_3,&\dots,&a_t, &a_{t+1},&\dots]\\
				[&a_0+a_1,&a_2, &-a_1-a_2,& a_1+a_2+a_3,&\dots,&a_t, &a_{t+1},&\dots]\\
				[&\dots&\dots&\dots &\dots&\dots&\dots &\dots&\dots]\\
				[&a_0+a_1,&a_2, &a_3,& a_4,&\dots,&-r', &r'+a_{t+1},&\dots]\\
			\end{align*} 
			\noindent We now work back the other direction, using $s_{t-1}, s_{t-2},\dots,s_1$, giving
			\begin{align*}
				[&a_0+a_1,&a_2, &a_3, & a_4,&\dots, &-r', &r'+a_{t+1},&\dots]\\
				[&a_0+a_1,&a_2, &\dots, &a_{t-1}+a_t, &-a_t,&a_t-r', &r'+a_{t+1},&\dots]\\
				[&\dots&\dots&\dots &\dots&\dots&\dots &\dots&\dots]\\
				[&a_0+a_1,&a_2+a_3+\dots+a_t, &-a_3-a_4-\dots-a_t, &a_3\dots,&a_{t-1}, &a_t-r', &r'+a_{t+1},&\dots]\\
				[&a_0+r',&a_1-r', &a_2, &a_3\dots,&a_{t-1}, &a_t-r', &r'+a_{t+1},&\dots]\\
			\end{align*} 
			\noindent We have a new local level $r''=a_2+\dots+a_{t-1}$. 
			By the induction hypothesis, we have a path involving only $I_0'=\{2,\dots,t-1\}$ from the hub $h$ to a weight with hub $[a_0+r',a_1-r'+r'',-a_{t-1},\dots,-a_2,a_t-r'+r'', a_{t+1}+r', a_{t+2},\dots]$. There was nothing in the procedures we followed which required the two components on either side of the interval to be positive.   On the left side, $a_1-r'+r''-a_2-a_3+\dots-a_t+r''=-a_t$. On the right side, $a_{t}-r'+r''=-a_1-a_2-\dots-a_{t-1}+r''=-a_1.$ The final result is $[a_0+r',-a_t,\dots,-a_1,a_{t+1}+r',a_{t+2},\dots]$ as required. 
		\end{proof}

		Following the same succession of actions of $s_i$, we can calculate the multipartition corresponding to $\rho$.  The first sweep from $1$ to $t$ fills in
		the first row of each partition, and the return from $t-1$ to $1$ fills in the first column. The $1$-corner  and $t$-corner partitions are now finished, either a row of length $t$ or a column of length $t$. We repeat the procedure but truncated, using 
		$s_2, s_3,\dots_{t-2}, s_{t-1},s_{t-2},\dots,s_2$.
		The passage from $2$ to $t-1$ and back to $2$ will fill in the second row and second column.  We follow this by $s_3,\dots,s_{t-2}, \dots,s_3$, continually truncating until the center is reached. Each partition will be a rectangle
		whose first row and column form a hook of length $t$. If $t$ is even, equal to 2c, the total degree will then be 
		\[
		\deg(\rho)=(a_1+a_t)t+(a_2+a_{t-1})2(t-1)+(a_3+a_{t-2})3(t-2)+\dots+(a_{c}+a_{c+1})c(c+1).
		\]  
		\noindent If $t=2c-1$, the last term in the sum is replaced by $(a_c)c^2$. 
		Since we have constructed the longest element in the finite Weyl group, this is its length.
		\begin{remark}
			We have done all our calculations for cases with $e \geq t+2$.  When 
			$e=t+1$, we add together the $0$ and $t+1$ components of the hub, since 
			in that case $\alpha_1$ and $\alpha_t$ both contribute to that component.	
		\end{remark}
		
		\begin{examp}We construct a face of $\widehat P(\Lambda)$ for                                                                                                                                                                                                                                                                                                           $e=5$,$\Lambda=\Lambda_1+\Lambda_2$,$I_0=\{1,2,3\}$, giving hubs and defects for most vertices, with the defect written as a superscript on the hub. Note the symmetry in the crystal. 
			\begin{center} 
				\begin{tikzpicture}
					\draw (0,0) node[anchor=south]{$[0,1,1,0,0]^0$} -- (-.5, -.5) node[anchor=east]{$[1,-1,2,0,0]^0$};
					\draw (0,0) -- (0,-.5) node[anchor=west]{$[0,2,-1,1,0]^0$} -- (-.5,-1) node[anchor=east]{$[1,0,0,1,0]^1$}--  (-1, -1.5) node[anchor=east]{$[2,-2,1,1,0]^0$} --
					(-1, -2)node[anchor=east]{$[2,-1,-1,2,0]^0$};
					\draw (0,-.5) --(.5,-1) node[anchor=west]{$[0,2,0,-1,1]^0$}
					--(-.5,-2);
					\draw (-.5,-.5)--(-.5,-1.5) node[anchor=west]{$[1,1,-2,2,0]^0$} --
					(.5,-2.5) node[anchor=west]{$[1,1,0,-2,2]^0$};
					(-.5,-2);
					\draw (0,-1.5) -- (0,-2);
					\draw (-.5,-1.5) -- (-1, -2) ; 
					\draw (-.5,-1) -- (0,-1.5);
					\draw (-1,-1.5) -- (-.5,-2) node[anchor=west]{$[2,-2,2,-1,1]^0$};
					\draw (-1,-2) -- (-.5,-2.5)node[anchor=east]{$[2,-1,0,0,1]^1$}--(0,-3)node[anchor=west]{$[2,-1,1,-2,2]^0$}--
					(0,-3.5) node[anchor=north]{$[2,0,-1,-1,2]^0$};
					\draw  (-.5, -2) -- (-.5,-3)node[anchor=east]{$[2,0,-2,1,1]^0$};
					\draw (-.5,-3)  -- (0,-3.5);
					\draw  (0, -2)  -- (-.5,-2.5);
					\draw (.5,-2.5)  -- (0,-3);
				\end{tikzpicture}
			\end{center}
			
			\noindent The Young diagram for the unique multipartition with weight $\rho$, with residues from $I_0$ inserted, is given by\\ 
			\noindent \young(123)\\
			\\
			\young(23,12)
		\end{examp}
		We need the following lemma:
		
		\begin{lem}\label{defect}[\cite{AOS2}, Lem. 3.1]For a dominant integral weight
			$\Lambda=a_0\Lambda_0+a_1\Lambda_1+\dots+a_\ell \Lambda_{\ell}$, if $\eta=\Lambda-\alpha$ for $\alpha \in Q_+$ has a non-negative $i$-component $w=\theta_i$ in $\hub(\eta)$, so that $\lambda$ lies at the high weight end of the $i$-string, then
			\begin{enumerate}
				\item   The defect of $\lambda=\eta-k\alpha_i$ for $ 0 \leq k \leq w$ is
				$	\defect(\eta)+k(w-k)$.
				\item  The $i$-components of the hub descend by $2$ as we move down the $i$-string,  so the absolute values of the hub component $\theta_i=\langle \lambda, h_i \rangle$ decrease as the defect increases and then increase as the defect decreases.
			\end{enumerate}	
			In summary,
			\begin{center}
				\begin{tabular}{ | l || c | c | c | c | c | c | }
					\hline
					$\lambda$ &$\eta$ & $\eta-\a_i$&$\dots$ & $\eta-k\a_i$ &$\dots$ & $\eta-w\a$ \\ \hline \hline
					$\defect(\eta)$ &$\defect(\eta)$&$\defect(\eta)+(w-1)$&$\dots$&$\defect(\eta)+k(w-k)$&$\dots$ & $\defect(\eta)$\\ \hline
					$\langle \lambda ,h_i \rangle$ &w&w-2&\dots&w-2k&\dots&-w \\ \hline					
					\hline
				\end{tabular}
			\end{center}
		\end{lem}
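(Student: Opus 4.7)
The plan is to prove both parts by direct calculation from the definitions of $\defect$ (given just after Definition~\ref{deg}) and $\hub$ (Definition~\ref{hub}), using the values of the symmetric product listed in the paragraph preceding Definition~\ref{deg}.

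For part (2), I would start with the easier statement. The hub component is $\theta_i = \langle h_i, \lambda\rangle$. Moving one step down the $i$-string replaces $\lambda$ by $\lambda-\alpha_i$, and by linearity the new $i$-th hub component is $\theta_i - \langle h_i,\alpha_i\rangle = \theta_i - a_{ii} = \theta_i-2$ since the diagonal entries of the Cartan matrix of type $A^{(1)}_\ell$ equal $2$. Iterating, after $k$ steps the $i$-th hub entry is $w-2k$, so when $k=w$ it reaches $-w$; this forces $0\leq k\leq w$ to be the full range of the $i$-string.

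For part (1), write $\lambda = \eta-k\alpha_i = \Lambda - \beta$ with $\beta = \alpha+k\alpha_i \in Q_+$, and expand
\begin{align*}
\defect(\lambda) &= (\Lambda\mid \beta) - \tfrac{1}{2}(\beta\mid\beta)\\
&= (\Lambda\mid\alpha) + k(\Lambda\mid\alpha_i) - \tfrac{1}{2}(\alpha\mid\alpha) - k(\alpha\mid\alpha_i) - \tfrac{k^2}{2}(\alpha_i\mid\alpha_i)\\
&= \defect(\eta) + k\,(\Lambda\mid\alpha_i) - k\,(\alpha\mid\alpha_i) - k^2,
\end{align*}
using $(\alpha_i\mid\alpha_i)=2$. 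The key step is then to eliminate the symmetric products in favor of $\theta_i$ and $a_i$. From the tabulated values of the symmetric product on simple roots, fundamental weights and $\delta$, the linear functionals $\mu\mapsto (\alpha_i\mid\mu)$ and $\mu\mapsto\langle h_i,\mu\rangle$ agree on the basis $\{\Lambda_0,\ldots,\Lambda_\ell,\delta\}$ of $P_\mathbb{R}$, hence coincide. Consequently $\theta_i = (\alpha_i\mid \eta) = (\alpha_i\mid\Lambda)-(\alpha_i\mid\alpha) = a_i-(\alpha_i\mid\alpha)$, so $(\alpha\mid\alpha_i) = a_i-\theta_i$, and also $(\Lambda\mid\alpha_i)=a_i$. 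Substituting yields
\[
\defect(\lambda) = \defect(\eta) + ka_i - k(a_i-\theta_i) - k^2 = \defect(\eta) + k(w-k),
\]
as claimed.

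The only real obstacle is bookkeeping: making sure the identification $\langle h_i,\mu\rangle = (\alpha_i\mid\mu)$ is justified from the values listed, and keeping track of signs and of the distinction between the action $f_i\cdot\psi=\psi-\alpha_i$ on the crystal and the coefficient changes. Both parts then assemble into the table displayed in the statement, since $\defect(\eta-k\alpha_i)=\defect(\eta)+k(w-k)$ is symmetric about $k=w/2$ and the hub entry $w-2k$ is linear in $k$, giving the advertised descent/ascent pattern of defects and hub components along the $i$-string.
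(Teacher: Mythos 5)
Your computation is correct: the identification $\langle h_i,\cdot\rangle=(\alpha_i\mid\cdot)$ on the basis $\{\Lambda_0,\dots,\Lambda_\ell,\delta\}$ is valid here because type $A^{(1)}_\ell$ is simply laced, and the resulting expansion gives exactly $\defect(\eta)+k(w-k)$ and the hub entries $w-2k$. The paper itself offers no proof of this lemma --- it is quoted verbatim from [AOS2, Lem.~3.1] --- so there is nothing to compare against; the only loose point in your write-up is the aside that the hub values ``force'' the string to have range $0\leq k\leq w$, which really follows from the unbroken-string property of weights of integrable modules rather than from the hub computation, but the lemma's hypotheses already supply this.
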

		
		The vertices of a face are essentially a block-reduced crystal of a finite Lie algebra of type A,
		and as such, have a block-reduced version of the  Bump-Schilling involution \cite{BS}.
		
		\begin{prop}\label{sym}
			A face for an interval $\{1,2,\dots,t\}$
			has an involution which is a reflection in the vertices of degree $r=a_1+a_2+\dots + a_t$. The involution $\tau$ sends a weight $\lambda$ with hub 	$h=[b_0,b_1,\dots,b_{t+1},b_{t+2},\dots]$  to a weight $\lambda'=\tau(\lambda)$ with hub
			$h'=[a_0+r-(b_{t+1}-a_{t+1}),-b_t,\dots,-b_1,a_{t+1}+r-(b_0-a_0),b_{t+1},\dots]$ and satifies    $(\tau(f_i\lambda))=e_{t+1-i}\tau(\lambda)$ and $(f_i\tau(\lambda'))=\tau(e_{t+1-i}(\lambda'))$. The involution $\tau$ preserves the defect of a weight.
		\end{prop}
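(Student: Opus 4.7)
The plan is to construct $\tau$ by tracing paths and then verify the three assertions in turn. The previous lemma exhibits $\rho$ at the ``bottom'' of the face, reached from $\Lambda$ via an explicit sequence of simple reflections using only $s_i$ for $i\in I_0$. For an arbitrary $\lambda$ in the face, write $\lambda=f_{i_k}\cdots f_{i_1}\Lambda$ with each $i_j\in I_0$, and set $\tau(\lambda)=e_{t+1-i_k}\cdots e_{t+1-i_1}\rho$. The first task is well-definedness. The face is a highest-weight crystal for the finite-type $A_t$ subsystem determined by $I_0$, and the Dynkin diagram automorphism $i\mapsto t+1-i$ preserves the Cartan matrix of $A_t$. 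Hence the Serre and commutation relations among the $f_i$ translate into the same relations among the $e_{t+1-i}$, so the map $\tau$ does not depend on the path chosen. The identity $\tau^2=\mathrm{id}$ and the intertwining relation $\tau(f_i\lambda)=e_{t+1-i}\tau(\lambda)$ are then immediate from the construction; the second intertwining is the first read on $\lambda'=\tau(\lambda)$.

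Next I verify the hub formula by induction on $\deg(\lambda)$. The base case $\tau(\Lambda)=\rho$ is the previous lemma. For the inductive step, $f_i$ with $i\in I_0$ changes $\theta_j$ of $\lambda$ by $-a_{ij}$, while $e_{t+1-i}$ changes the corresponding component of $\tau(\lambda)$ by $+a_{(t+1-i),j}$. The Cartan matrix symmetry $a_{ij}=a_{(t+1-i),(t+1-j)}$ on the interior $i,j\in I_0$, combined with the fact that positions $1,\ldots,t$ of $\hub(\tau(\lambda))$ are the negated reversal $-b_t,\ldots,-b_1$ of the corresponding positions of $\hub(\lambda)$, makes the interior changes match. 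For the boundary components, $f_1$ and $f_t$ cause changes in $\theta_0$ and $\theta_{t+1}$; these are absorbed by the correction terms in the formula, which is precisely why those terms involve the local level $r$ and the boundary coordinates $b_0,b_{t+1}$ with shifts by $a_0,a_{t+1}$. One checks, for instance, that when $i=1$ the term $a_{t+1}+r-(b_0-a_0)$ decreases by $1$, matching the effect of $e_t$ on $\theta'_{t+1}$ via $a_{t+1,t}=-1$.

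Defect preservation then follows from Lemma~\ref{defect}: along any $i$-string of length $w+1$, the defect $\defect(\eta-k\alpha_i)=\defect(\eta)+k(w-k)$ is symmetric under $k\mapsto w-k$. Since $\tau$ maps an $i$-string of $\lambda$ to a reversed $(t+1-i)$-string of $\tau(\lambda)$, the $k$-th position is paired with the $(w-k)$-th position, and the defects agree. Combined with the base case $\defect(\Lambda)=\defect(\rho)=0$, an induction on $\deg(\lambda)$ gives $\defect(\tau(\lambda))=\defect(\lambda)$ throughout the face. The main obstacle is the well-definedness step: verifying that distinct sequences $f_{i_k}\cdots f_{i_1}$ producing the same $\lambda$ yield the same image under the substitution $f_i\mapsto e_{t+1-i}$. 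The cleanest route is to invoke the Lusztig involution for finite-type $A_t$ crystals, then argue that the hub components outside $I_0$ are dynamically tied to those inside via the Cartan interactions at the boundary $\{0,t+1\}$, which is exactly what the correction terms in the formula encode.
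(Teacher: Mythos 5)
Your argument is correct and, in its verification steps, matches the paper's: induction on the degree with base case $\tau(\Lambda)=\rho$ from the preceding lemma, a component-by-component check of the hub formula using the Cartan pairings (including the boundary check at $i=1$ and $i=t$ against the correction terms involving $r$, $b_0$, $b_{t+1}$), and defect preservation via the symmetry $k\mapsto w-k$ of the string formula in Lemma~\ref{defect}. The one genuine difference is how $\tau$ is defined, and it is worth noting because the step you identify as ``the main obstacle'' is not an obstacle in the paper's formulation. The operators $f_i$ and $e_i$ in Proposition~\ref{sym} act on \emph{weights} of the block-reduced crystal, not on multipartitions: $f_{i_k}\cdots f_{i_1}\Lambda$ is simply $\Lambda-\alpha_{i_1}-\cdots-\alpha_{i_k}$, which depends only on the multiset of indices, and on a face the $\delta$-coefficient vanishes, so the hub determines the weight. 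The paper therefore defines $\tau$ directly by the hub formula, and well-definedness is automatic; no appeal to Serre relations or the Lusztig involution is needed (that machinery would be required only for the finer statement that $\tau$ lifts to an involution of the crystal of multipartitions, which is not what is claimed here). What your path-based definition does still require --- and what the paper also leaves implicit --- is that the image $\tau(\lambda)$ is actually a vertex of the face, i.e.\ that $\rho+\sum c_i\alpha_{t+1-i}\in P(\Lambda)$; your invocation of the diagram automorphism of the finite $A_t$ weight system is a reasonable way to supply this, and is in the spirit of the paper's earlier remarks on duality of finite highest weight modules. So: correct, same skeleton, with your extra machinery solving a problem that the weight-level definition dissolves, at the cost of slightly obscuring that the only nontrivial existence statement is membership of the dual path in the face.
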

		\begin{proof}
			We first note that, on a face, the coefficient of the null root $\delta$ is zero. Therefore, the hub determines the weight.
			On the components from $1$ to $t$, the map $\tau$ is the composition of two involutions, reversing the order and multiplication by $-1$.  We will check
			that $\tau$ is an involution on the $0$ component of the hub, and the $t+1$ component is dual.
			\begin{align*}
				\tau(\tau([b_0,b_1,b_2,\dots]))=&\tau[a_0+r-(b_{t+1}-a_{t+1}),-b_t,-b_{t-1},\dots]\\
				=&[a_0+r-((a_{t+1}+r-(b_0-a_0))-a_{t+1}),b_1,b_2,\dots]\\
				=&[b_0,b_1,b_2,\dots]
			\end{align*}
			
			We use induction on the degree, which is equivalent to induction on the length of a path in the crystal leading to $\lambda$. For the base case, $\tau(\Lambda)=\rho$ since in that case $b_0=a_0$ and $b_{t+1}=a_{t+1}$. Let us assume that the lemma holds for a path of length $m$, and let us act by some $f_i$ to get $\bar \lambda$, which means subtracting $\alpha_i$.
			The dual operation is acting on $\tau(\lambda)$by $e_{t+1-i}$, and the effect on the hub is to add $\alpha_{t+1-i}$ to $\hub(\tau(\lambda))$. It suffices the check on the hubs that
			$f_i(\tau(\lambda))=\tau(e_{t+1-i}\lambda)$. 
			In the $i$-th component we get $b_i-2$, and in the dual, in the $(t+1-i)$-component, we have $-b_i+2$ as needed.  If $j$ is adjacent to $i$ but not in the set 
			$\{0, t+1$\}, then the corresponding values are $b_j+1$ and $-b_j-1$.
			If $i$=1, then the $0$ component changes from $b_0$ to $b_0+1$, 
			while the $t+1$ component of the dual changes from 
			$a_t+r-(b_{0}-a_{0})$ to 	$a_t+r-(b_{0}+1-a_{0})$, which gives
			$\tau(e_{t+1-i}\lambda)$ as needed.	The case $i=t$ is dual.
			
			To show that defects are preserved, we use the same path and its dual. Since by Lemma \ref{defect}, the defect along a string depends only on the length of the string and the defect at one end, the duality in the form of the crystal indicates equality for the defects. 
		\end{proof}
		
		\begin{examp} We illustrate Prop. \ref{sym} Consider a dominant integral weight $\Lambda=a_0\Lambda_0+\dots+a_\ell\Lambda_\ell$. Set $r'=a_1+a_{2}$. The crystal that represents the
			module generated from $\Lambda$ by $f_1,f_{2}$ has a hexagonal form and highest degree $2r'$
			
			The corners of the hexagon will be defined  by the defect $0$ elements
			$\Lambda$, $s_1\Lambda$, $s_2\Lambda$,  $s_2s_1\Lambda$, $s_1s_2\Lambda$, $s_2s_1s_2\Lambda$= $s_1s_2s_1\Lambda$. We calculate the hubs on one of the two paths leading to the element of the highest degree since the degree is the sum of the number of copies of $\alpha_1$ and $\alpha_2$ which we subtract.  Acting by $a_1$ copies of $\alpha_1$ give a new hub
			$[\dots,a_{0}+a_1,-a_1,a_1+a_2, a_3,\dots]$.
			Since the component at $2$ is $a_1+a_2$, we subtract $a_2$ copies of $\alpha_2$.  This gives a hub
			\[
			[\dots, a_{0}+a_1,a_2,-a_1-a_2, r'+ a_3,\dots]
			\]
			As a final step, we remove $a_2$ copies of $\alpha_1$, giving a hub
			\[
			[\dots, a_{0}+r',-a_{2},-a_1, r'+ a_{3},\dots]
			\]	
			
			The hexagon has an axis of symmetry given by the hyperplane of points of degree $r'$. We claim that symmetrically located points have identical defects and thus canonical basis elements for those defects have the same degree as polynomials.
			Since the $1$ and $2$ components of the hub are negative, we can go no farther with only $f_1$ and $f_{2}$.
			
			We consider the particular case $a_1=3,a_2=2$, for which we will draw the hexagon.  Since the content is $0$ except in components $1$ and $2$, we abbreviate the content by $(j_1,j_2)$ and label the external vertices by their content, with the defect as a superscript. The remaining vertices will be labeled by their defect to illustrate the subsequent lemma, Lemma \ref{def}.
			
			\begin{center} 
				\begin{tikzpicture}
					\draw (0,0) node[anchor=east]{$(0,0)^0$} -- (-.5, -.5) node[anchor=east]{$(1,0)^2$}--
					(-1, -1) node[anchor=east]{$(2,0)^2$}--(-1.5, -1.5) node[anchor=east]{$(3,0)^0$};
					\draw (0,0) -- (.5,-.5) node[anchor=west]{$(0,1)^1$} -- (1,-1) node[anchor=west]{$(0,2)^0$} -- (-1.5,-3.5) ;
					\draw (-.5,-.5) --(0,-1)node[anchor=west]{$4$} --(.5,-1.5) node[anchor=west]{$4$}
					--(1,-2)node[anchor=west]{$(1,3)^2$};
					\draw (.5,-.5)--(-1.5,-2.5)node[anchor=east]{$(4,1)^1$};
					\draw (-1,-1) -- (-.5,-1.5)node[anchor=west]{$5$}
					-- (0,-2)node[anchor=west]{$6$}--(.5,-2.5)
					node[anchor=west]{$5$}--(1,-3)node[anchor=west]{$(2,4)^2$};
					\draw (1,-2)--(-1,-4);
					\draw (-1.5,-1.5) -- (-1,-2)node[anchor=west]{$4$}
					-- (-.5,-2.5)node[anchor=west]{$6$}--(0,-3)
					node[anchor=west]{$6$}--(0.5,-3.5)
					node[anchor=west]{$4$}--(1,-4)node[anchor=west]{$(3,5)^0$};
					\draw (1,-3)--(-.5,-4.5);
					\draw  (-1.5,-2.5)--(-1,-3)node[anchor=west]{$4$}--(-.5,-3.5)node[anchor=west]{$5$}--(0,-4)node[anchor=west]{$4$}--(.5,-4.5)node[anchor=west]{$(4,5)^1$};
					\draw (-1.5,-3.5)node[anchor=east]{$(5,2)^0$}--(-1,-4) node[anchor=east]{$(5,3)^2$}--(-.5,-4.5)node[anchor=east]{$(5,4)^2$}--(0,-5)node[anchor=east]{$(5,5)^0$};
					\draw (1,-4)--(0,-5);
				\end{tikzpicture}
			\end{center}

		\end{examp}

		\begin{lem}\label{def}
			For a face with only operation by $f_1$ and $f_2$,  there is a weight with content $(j_1,j_2)$ if $0 \leq j_1 \leq a_1+j_2$ when $j_2 \leq a_2$.
			or $0 \leq j_2 \leq a_2+j_1$ when $j_1 \leq a_1$ or $0 \leq j_1,j_2 \leq a_1+a_2$. The defect of a weight with content $(j_1,j_2)$, $0 \leq j_1 \leq a_1$ and $0 \leq j_2 \leq a_2$ is 
			\[
			j_1(a_1-j_1)+j_2(a_2-j_2)+j_1j_2,
			\]
		\end{lem}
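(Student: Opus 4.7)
The statement decomposes into two claims: the face contains a vertex at each content $(j_1,j_2)$ satisfying the stated conditions, and on the rectangle $0\le j_1\le a_1,\ 0\le j_2\le a_2$ the defect of that vertex equals $j_1(a_1-j_1)+j_2(a_2-j_2)+j_1j_2$. The defect is an immediate algebraic consequence of the general formula in Section~2; the existence part is reduced to the construction of explicit crystal paths of $f_1$'s and $f_2$'s from $\Lambda$.

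I would begin with the defect. Writing $\lambda=\Lambda-\alpha$ with $\alpha=j_1\alpha_1+j_2\alpha_2$ and applying $\defect(\lambda)=(\Lambda\mid\alpha)-\tfrac12(\alpha\mid\alpha)$, the bilinear identities $(\Lambda_i\mid\alpha_j)=\delta_{ij}$, $(\alpha_i\mid\alpha_i)=2$ and $(\alpha_1\mid\alpha_2)=-1$ yield $(\Lambda\mid\alpha)=a_1j_1+a_2j_2$ and $(\alpha\mid\alpha)=2j_1^2-2j_1j_2+2j_2^2$. Substituting and simplifying produces $j_1(a_1-j_1)+j_2(a_2-j_2)+j_1j_2$.

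For existence I would produce one explicit crystal path for each region. In the first region ($j_2\le a_2$ with $j_1\le a_1+j_2$), I apply $f_2^{j_2}$ followed by $f_1^{j_1}$: the $2$-component of the hub at $\Lambda$ equals $a_2\ge j_2$, and the $1$-component at the intermediate content $(0,j_2)$ equals $a_1+j_2\ge j_1$, so both substrings remain inside the crystal. The second region ($j_1\le a_1$, $j_2\le a_2+j_1$) is covered symmetrically by $f_1^{j_1}$ followed by $f_2^{j_2}$. For the remaining contents, in which $j_1\ge a_1$ and $j_2\ge a_2$ with both bounded by $a_1+a_2$, I use the three-step path $f_1^{a_1},\,f_2^{j_2},\,f_1^{j_1-a_1}$; the second leg is accommodated by the $2$-component $a_1+a_2$ at $(a_1,0)$, and on the third leg each intermediate vertex $(a_1+m,j_2)$ lies in the hexagon and so in $P(\Lambda)$, the required number of steps $j_1-a_1\le a_2$ being controlled by the hexagon bound $j_1\le a_1+a_2$.

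The main obstacle is the hub bookkeeping needed to verify that at every intermediate vertex the relevant hub component is positive, so that the next $f_i$ acts non-trivially. The formulas $\theta_1=a_1-2j_1+j_2$ and $\theta_2=a_2+j_1-2j_2$ for the $1$- and $2$-components at content $(j_1,j_2)$, together with Lemma~\ref{defect} describing the evolution of these components along an $i$-string, make this tractable. A concluding check is that the three stated cases together cover precisely the hexagonal face pictured in the preceding example, so that the lemma captures every achievable content and nothing outside the face.
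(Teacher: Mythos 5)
Your defect computation is correct but reaches the formula by a different route than the paper. The paper never expands the bilinear form: it applies Lemma~\ref{defect} twice along the path $2^{j_2}1^{j_1}$, reading off $j_2(a_2-j_2)$ for the descent of the $2$-string from $\Lambda$ (top hub component $a_2$) and then $j_1(a_1+j_2-j_1)$ for the descent of the $1$-string from content $(0,j_2)$ (top hub component $a_1+j_2$), and adds; this yields existence and the defect simultaneously from the single path that is your region-1 path. Your direct evaluation of $(\Lambda\mid\alpha)-\tfrac12(\alpha\mid\alpha)$ with $\alpha=j_1\alpha_1+j_2\alpha_2$ gives the same polynomial and has the advantage of not depending on any particular path, only on $\lambda\in P(\Lambda)$. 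Note that the paper's one-sentence proof stops there: regions 2 and 3 of the existence claim are delegated to the remark ``by duality or by reflection in an $i$-string,'' so your treatment of those regions goes beyond what the paper writes down.

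That said, your region-3 argument has a genuine soft spot. On the third leg $f_1^{j_1-a_1}$ you do not start at the top of a $1$-string, and the hub component $\theta_1=j_2-a_1-2m$ at the intermediate contents $(a_1+m,j_2)$ need not stay positive: for $a_1=3$, $a_2=2$ and target $(5,3)$, after $1^{3}2^{3}$ you sit at $(3,3)$ with $\theta_1=0$ and still need two more $f_1$-steps, so the positivity check you propose cannot be completed, even though $(5,3)$ is in the face. Your fallback, that each intermediate vertex ``lies in the hexagon and so in $P(\Lambda)$,'' is circular, since the identification of the hexagonal region with the set of realized contents is exactly what is being proved. The clean repair is the involution $\tau$ of Prop.~\ref{sym}, which sends content $(j_1,j_2)$ to $(a_1+a_2-j_2,\,a_1+a_2-j_1)$ and hence carries all of region 3 into the rectangle $[0,a_1]\times[0,a_2]$ already covered by your regions 1 and 2; alternatively one must track positions within strings via Lemma~\ref{defect} (distance to the bottom of the string, not the sign of the hub component) rather than hub positivity alone.
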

		\noindent Most other defects in the face can be obtained by duality or 
		by reflection in an $i$-string for some $i$. 
		
		\begin{proof}
			We apply the lemma above, going down the $2$-string to $j_2$ and then going across the $1$-string, which will have length $a_1+j_2$, giving altogether 
			\[  
			j_2(a_2-j_2)+j_1(a_1+j_2-j_1).
			\]	 
		\end{proof}

		\begin{prop}\label{e-reg} Let $\Lambda=a_0\Lambda_0+a_1\Lambda_1+a_2\Lambda_2+ \dots a_\ell\Lambda_\ell$ be a dominant integral weight in a Lie algebra of affine type A and rank $e$ for $e \geq 3$.
			\begin{enumerate}[(i)]
				\item
				In the  face of $\widehat{P}(\Lambda)$ for $I_0=\{1,2\}$, 
				the $e$-regular multipartitions $\mu$ with content $(j_1,j_2)$ 
				are determined by setting $t=\min(a_1,j_1)$, letting $x= \max(0,j_1-a_1)$ and choosing
				$w$ with $\max(0,j_2-a_2) \leq w \leq \min(t,j_2, a_1+j_2-j_1) $, where the $a_1$ $1$-corner partitions consist of $w$ partitions $(2)$, $t-w$ partitions $(1)$, $a_1-t$ partitions $\emptyset$, and while the $a_2$ $2$-corner partitions  consist of $x$ partitions $(1,1)$, $j_2-w-x$ partitions $(1)$ and finally $a_2-j_2+w$ partitions $\emptyset$. Each $e$-regular multipartition is reached by a path $p(u)=2^u1^{j_1}2^{j_2-u}$, where $u=\min(j_1-w,j_2-w)$. 
				\item Letting $\bar{j_1} =a_1+\min(a_2,j_2)-j_1$ and $\bar j_2=a_2+\min(a_1,j_1)-j_2$, the number of $e$-regular multipartitions with content $(j_1,j_2)$ is $\min(j_1,j_2,a_1,\bar j_1)+1$ when $j_2 \leq a_2$, and $\min(\bar{j_2},a_2,\bar{j_1})+1$ when $j_2>a_2$.
			\end{enumerate}
		\end{prop}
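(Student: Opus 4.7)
The plan is to combine a strong shape restriction coming from the face condition with a step-by-step signature-rule analysis of the paths $p(u)$, and then close with a direct count.

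First, I would use the face constraint $c_j = 0$ for $j \notin \{1,2\}$ to restrict shapes cell by cell. In a $1$-corner partition the only cells with residue in $\{1,2\}$ are $(1,1)$ (residue $1$) and $(1,2)$ (residue $2$), since $(2,1)$ has residue $0$ and $(1,3)$ has residue $3$; so the only admissible shapes are $\emptyset, (1), (2)$. The symmetric analysis for $2$-corner partitions gives shapes $\emptyset, (1), (1,1)$. Any partition with a different corner residue must be empty, because its $(1,1)$ cell already carries a forbidden residue. Thus an $e$-regular multipartition with content $(j_1, j_2)$ in the face is described, up to the ordering fixed by the Mathas convention, by four nonnegative integers $w_2, w_1, x_2, x_1$ (counts of $1$-corner $(2)$, $1$-corner $(1)$, $2$-corner $(1,1)$, $2$-corner $(1)$) subject to $w_1 + w_2 + x_2 = j_1$ and $w_2 + x_1 + x_2 = j_2$.

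Next, I would verify that for each admissible $w$ the path $p(u) = 2^u 1^{j_1} 2^{j_2-u}$ with $u = \min(j_1,j_2) - w$ produces the multipartition described in the proposition. Applying $f_2^u$ to the empty multipartition: the signature has no ``$-+$'' cancellations, so the $2$-cogood node cycles through the $a_2$ empty $2$-corner partitions, producing $u$ shape-$(1)$ components. Applying $f_1^{j_1}$: the addable $1$-nodes are $(1,1)$ of each $1$-corner partition and $(2,1)$ of each existing shape-$(1)$ $2$-corner partition; by the ordering convention and the ``$-+$'' cancellation rule, the $1$-cogood node fills the $1$-corner block first and only overflows into $(2,1)$ of $2$-corner partitions once all $a_1$ are filled, giving $t = \min(a_1, j_1)$ shape-$(1)$ $1$-corner partitions and $x = \max(0, j_1 - a_1)$ shape-$(1,1)$ $2$-corner partitions (hence the constraint $u \ge x$). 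Applying $f_2^{j_2-u}$: a parallel signature computation shows the $2$-cogood node first promotes $1$-corner $(1)$-partitions to shape $(2)$ before opening fresh empty $2$-corner partitions; the number promoted is $w$ and the number opened is $j_2 - u - w$.

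For exhaustiveness, the shape restriction of the first paragraph bounds all possibilities, and the signature argument of the second shows that in any path from the empty to an $e$-regular multipartition in the face no $1$-addition enters a $2$-corner partition while an empty $1$-corner partition remains; this pins $x_2 = \max(0, j_1 - a_1)$, so $w = w_2$ is the only free parameter. The bounds on $w$ then come directly from requiring $w$, $t - w$, $a_1 - t$, $x$, $j_2 - w - x$, and $a_2 - j_2 + w$ to all be nonnegative, yielding $\max(0, j_2 - a_2) \le w \le \min(t, j_2, a_1 + j_2 - j_1)$. Part (ii) follows by counting integers in this interval: when $j_2 \le a_2$ the lower bound is $0$ and the count is $\min(j_1, j_2, a_1, \bar j_1) + 1$; when $j_2 > a_2$ it is $\min(\bar j_2, a_2, \bar j_1) + 1$, using the definitions of $\bar j_1$ and $\bar j_2$.

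The main obstacle will be the signature bookkeeping in the second paragraph, namely confirming, once several $2$-corner partitions have acquired cells of the same residue, that the ``$-+$'' cancellations still leave the cogood node in the predicted partition, together with the matching converse in the third paragraph that rules out alternative orderings of $f_1$ and $f_2$ producing $e$-regular multipartitions outside the $p(u)$-family. Once those are handled, the remaining steps are elementary shape arithmetic.
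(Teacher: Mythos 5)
Your opening cell-by-cell residue argument is a nice touch: it gives a cleaner justification than the paper does for why the only admissible shapes are $\emptyset,(1),(2)$ in the $1$-corner block and $\emptyset,(1),(1,1)$ in the $2$-corner block, and your reduction to the four counts $w_2,w_1,x_2,x_1$ with two linear constraints, followed by the non-negativity bounds and the interval count for part (ii), matches the paper's logic for the ``numerology.'' The gap is exactly where you flag it, and it is not a minor bookkeeping issue. Your exhaustiveness step rests on the claim that \emph{in any path} from the empty multipartition, the $1$-cogood node never lands in a $2$-corner partition while an empty $1$-corner partition remains (and an analogous control of where the $2$-cogood node lands). That claim is only true for multipartitions already known to be of the constrained form: the signature of an arbitrary intermediate multipartition in the face, e.g.\ one with an empty $1$-corner component sitting between occupied ones, can have a ``$-+$'' cancellation that pushes the cogood node into the $2$-corner block. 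Ruling such configurations out requires knowing that every intermediate $e$-regular multipartition already has its $1$-nodes packed from the top without gaps --- which is the statement being proved. The paper escapes this circularity by inducting on $j_1+j_2$: it assumes the full structural description (the value of $w'$, the packing of $T$ and $X$) for the predecessor $\mu'$ of content $(j_1-1,j_2)$ or $(j_1,j_2-1)$, and then does a four-way case analysis ($j_1\lessgtr j_2'$, $j_1'\lessgtr a_1$) to locate the new cogood node and to verify that the updated $u$ still yields a path $p(u)$ landing on $\mu$. Your plan has no inductive hypothesis to lean on, so the ``matching converse'' you defer to the third paragraph cannot be closed by a purely local signature computation.

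A second, smaller soft spot: in your direct computation of the third segment $f_2^{\,j_2-u}$ you assert the cogood node ``first promotes $1$-corner $(1)$-partitions to shape $(2)$ before opening fresh empty $2$-corner partitions.'' Whether promotion or fresh opening happens first depends on the component ordering convention and on how the $u-x$ removable $2$-nodes in the $2$-corner block cancel addable ones; the paper's treatment splits into the cases $u=j_1-w$ versus $u=j_2-w$ precisely because the cancellation pattern differs, and in one of those cases the cogood node is a fresh $2$-corner node, not a promotion. The final multipartition comes out the same, but to certify that each intermediate step of $p(u)$ is a genuine cogood addition you need to carry out that case split rather than assert a uniform order. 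You also need the brief check, present in the paper, that the bounds on $w$ force $j_1\le a_1+u$ so that the segment $1^{j_1}$ does not run off the end of its string.
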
 
		\begin{proof} \begin{enumerate}[(i)]
				\item The restrictions on $t$, $x$, and $w$ result from the natural requirement that each type of partition cannot occur a negative number of times. The total number $t+x$ of added $1$-nodes must be $j_1$ so $t \leq j_1$ always holds, and, similarly, we always have  $w \leq j_2$.  Since when building an $e$-regular multipartition, we add all $1$-corner nodes from top to bottom without gaps, either $t=j_1 \leq a_1$ and $x=0$ or $t=a_1$ and $x=j_1-a_1$. We now consider the non-negativity of the six different possible kinds of partitions listed in the proposition, together with their implications.
				
				\begin{itemize}
					\item  $w \geq 0$.
					\item  $t-w \geq 0$ implies that $w \leq t$.
					\item  $a_1-t \geq 0$ implies $t \leq a_1$.
					\item  $x \geq 0$.
					\item  $j_2 -w-x \geq 0$ implies that $w \leq j_2-x \leq j_2+a_1-j_1$.
					\item  $a_2-(j_2-w) \geq 0$ implies that $w \geq j_2-a_2$. 
				\end{itemize}
				Combining all this information gives the conditions in the proposition:
				$t=\min(j_1, a_1)$, $x=\max (0, j_1-a_1)$, and $\max(0, j_2-a_2) \leq w \leq \min(t, j_2, a_1+j_2-j_1)$.

				We proceed to prove (1) by induction on $j_1+j_2$.  For $j_1+j_2=0$, there is the unique empty multipartition.  As an induction step, we assume that the proposition holds for 
				$j_1+j_2-1$.  For a given content $(j_1,j_2)$, there is a unique $t$ and $x$, but a range of choices for $w$.  We must show that for every choice of $w$ satisfying the bounds, we get an $e$-regular multipartition by following the given path, and then that, given an $e$-regular multipartition, it must correspond to some $w$ in the given range and be reached by the given path. Every $e$-regular multipartition $\mu$ with content $(j_1,j_2)$ is obtained from $\mu'$ of content $(j_1-1,j_2)$ or $(j_1,j_2-1)$, and we will use the induction hypothesis to find the new $w$ and to show that the new path leads to $\mu$.
				
				We start by choosing $w$ satisfying $\max(0,j_2-a_2) \leq w \leq \min(t,j_2, a_1+j_2-j_1)$.
				We will first need to show that the given choice of $u=\min(j_1-w,j_2-w)$ gives a path that lies in the face. Since $w \geq j_2-a_2$, we have 
				$u =\min(j_1-w,j_2-w) \leq j_2-w \leq a_2$, so the first segment lies in the face.
				The $1$-string from $u$ has length $a_1+u$, so portion of the $1$-string of length $j_1$ surely lies in the face if $j_1 \leq a_1$, and if $j_1 > a_1$, then $w \leq t \leq a_1$, which implies that $ j_1-w \geq j_1-a_1$. . We have chosen $u=\min(j_1-w,j_2-w)$. If $u=j_1-w$ then $a_1+u \geq j_1$ so the path $2^u1^{j_1}$ lies in the crystal. If $u=j_2-w$, then since $w \leq a_1+j_2-j_1$, we have $j_1 \leq a_1+j_2-w=a_1+u$, and again, the path $2^u1^{j_1}$ lies in the crystal. The $2$-string is convex and the endpoint with content $(j_1,j_2)$ lies in the face, so the entire segment lies in the face. 
				
				We now show that choosing the $i$-cogood node at each step of the path produces a multipartition with $w$ copies of the partition $(2)$ at the beginning.  The first $u$ $2$-nodes produce partitions $(1)$ in the first $u$ $2$-corner nodes. There are no removable $1$-nodes, so the $j_1$ addable $1$-nodes go to produce $t$ partitions $(1)$, and, if $j_1>a_1$, another $x$ partitions $(1,1)$.  By definition,  $u=\min(j_1-w,j_2-w)$.
				If $u=j_1-w \leq j_2-w$, then substituting $j_1=t+x$ gives $u-x=t-w$. Thus there are exactly $w$ of the $j_2-u$ nodes we must add which are not canceled by the $u-x$ removable nodes, giving a multipartition whose first $w$ partitions are $(2)$.  If $u=j_2-w \leq j_1-w$, then $w=j_2-u$, the number of $2$-nodes to be added in the last segment of the path, and the removable nodes do not interfere because $u-x \leq t-w$. When we add the remaining $j_2-u$ $2$-nodes, we get $\mu$.

				We have shown that any $w$ satisfying the inequalities in the proposition corresponds to an $e$-regular multipartition, and now we must show that any $e$-regular multipartition comes from such a $w$ and is reached by the path $p(u)$. In the first paragraph of this proof, we showed that the conditions on $t,x,w$ are consequences of the structure of the multipartition as given in the proposition. We need to show that the path $p(u)=2^u1^{j_1}2^{j_2-u}$ lies in the crystal, where $u=\min(j_1-w,j_2-w)$, and ends at the multipartition $\mu$.  We are using induction, assuming the $\mu$ comes from an $e$-regular multipartitions $\mu'$ for which the number $w'$ of partitions $(2)$ satisfies the inequalities
				$\max(0,j_2'-a_2) \leq w' \leq \min(t',j_2', a_1+j_2'-j_1') $
				We now consider two cases.
				\begin{enumerate}
					\item If $j_1$ is fixed, so that $\mu'$ has content $(j_1,j_2')$, and $j_2=j_2'+1$. Neither $t$ nor $x$ will be changed by adding a $2$-node, so we have $t'=t$ and $x'=x$. The $2$-signature of $\mu'$  has $w'$ removable $2$-nodes, $t-w'$ addable $2$-nodes, $j_2'-w'-x$ removable $2$-nodes and $a_2-j_2'+w'$ addable $2$-nodes.
					\begin{itemize}
						\item Suppose $j_1>j_2'$ so that $j_1-w'>j_2'-w'$, giving 
						$u'=\min(j_1-w',j_2'-w')=j_2'-w'<j_1-w'$.  Using $t+x=j_1$, the condition becomes $t-w' > u-x$, that is, there are more $2$-addable nodes than can be canceled out by the removable nodes. Adding the $1$-corner $2$-cogood node makes $j_2=j_2'+1,w=w'+1,u=u'$, and indeed, we reached this $e$-regular multipartition by the path  $2^u1^{j_1}2^{j_2-u}$. Since by induction, we had $j_1' \leq a_1+u'$, this is still true with $j_1'=j_1$ and $u'=u$, so the path to $\mu$ lies in the face. 
						\item  Now suppose that $j_1 \leq j_2'$ so that $u'=j_1-w'$. Then the number of removable $2$-corner nodes in the signature exactly cancels the addable $1$-corner nodes, so the $2$-cogood node must be the first addable $2$-corner node in position $j_2'-w'+1$.  Thus $w=w'$, $u=j_1-w'=u'$ and we have again reached 
						$\mu$ by the path $2^u1^{j_1}2^{j_2-u}$.
					\end{itemize}

					\item If $j_2$ is fixed, we are considering the case that we have $\mu'$ to which we can add a $1$-cogood node to get $\mu$. Since we add no $2$-node, $w=w'$.  We divide into two cases, $j_1'<a_1$ or $j_1' \geq a_1$.
					\begin{itemize}
						\item 	In case $j_1'<a_1$, then $t'=j_1'$ and $x'=0$. We assume that there is a  $1$-cogood node, and adding it sets $t=t'+1$.  By definition, 
						$u'=\min(j_1'-w',j_2'-w').$ 
						
						If $j_1'-w'<j_2-w'$ then $j_1-w \leq j_2-w$, so $u=j_1-w=u'+1$. The change in $u$ allows us to reach $\mu$ by a path $2^u1^{j_1}2^{j_2-u}$ . Since $t=j_1$ and 
						$u=j_1-w$, the $u$ removable nodes exactly cancel all the addable $2$-nodes
						except the first $w$, and the remaining $2$-nodes are then added after the first $u$ $2$-corner nodes.
						
						If $j_1'-w'\geq j_2-w'$, then $u'=j_2-w'$ so $u=u'$. In this case, too, we can reach $\mu$ by the path $2^u1^{j_1}2^{j_2-u}$ because, when we have added the 
						cogood nodes of the path $2^u1^{j_1}$, we have only 
						$w=j_2-u$ cogood $2$-nodes to add and this gives us $\mu$.
						
						\item In the case $j_1' \geq a_1$, then $t'=t=a_1$. Since there is an addable $1$-node, we must have
						$u'>x'=j_1'-a_1$, and after it is added we will have $x=x'+1$. Furthermore, by adding $a_1$ to both sides of the inequality,
						we get $u'+a_1 > j_1'$, so that $u'+a_1\geq j_1'+1=j_1$. since by its definition, $u \geq u'$, this shows that $2^u1^{j_1}$ lies in the face. Since the face is convex and $(j_1,j_2)$ is the content of a vertex in the face, the entire path $2^u1^{j_1}2^{j_2-u}$ lies in the face. It remains to finish the proof that choosing the $i$-cogood nodes at each vertex along the path produces the desired multipartition $\mu$.
						
						If $u'=j_2-w$, we set $u=u'$ since there are no more $2$-nodes with that value of $w=w'$. Since $u' =\min(j_1-w,j_2-w) \leq j_1'-w'$, and $j_1'=t'+x'$, we have
						$u'-x'\leq t'-w'$ and thus $u-x <t-w$. If we now use the path $2^u1^{j_1}$, we will have exactly $w=j_2-u$ $2$-nodes to add,
						and the $u-x$ removable $2$-nodes will not interfere with adding them.
						
						If $u'<j_2-w$, then $u'=j_1'-w$. We then have $u=u'+1=\min(j_1+1-w,j_2-w)$, so $u=j_1-w$.  Using 
						$j_1=t+x$, we have $u-x=t-w$, so the number $u-x$ of removable $2$-nodes equals the number $t-w$ of addable $1$-corner $2$-nodes.
						Thus we can reach $\mu$ by taking cogood nodes along the path $2^u1^{j_1}2^{j_2-u}$.
						
					\end{itemize}
					
				\end{enumerate}
				\item	Since we have shown that each $w$ satisfying the given inequalities, and only those $w$, give $e$-regular multipartitions, we need only add $1$ to the difference between the upper bound $U=\min(t,j_2,a_1+j_2-j_1)$ and the lower bound $L=\max(0, j_2-a_2)$ to find the number of $e$-regular partitions with content $(j_1,j_2)$. We defined numbers $\bar{j_1}$ and $\bar{j_2}$, which calculate the distance of $(j_1,j_2)$ from the bottom of its $i$-string for $i=1,2$. Since $t=\min(j_1,a_1)$, by using $\min(\min(a,b),c,d)=\min(a,b,c,d)$, we get $U=\min(j_1,a_1,j_2,a_1+j_2-j_1)$, as needed.
				
				The face divides naturally into two sectors:
				\begin{enumerate}
					\item $ j_2 \leq a_2$. Since $j_2-a_2 \leq 0$, the lower bound $L$ is $0$, and the upper bound is $U=\min(j_1, a_1, j_2, \bar{j_1})$, so $U-L= \min(j_1,j_2,a_1,\bar{j_1})$.
					
					\item If $j_2 > a_2$, then $L=j_2-a_2$. We have $U=\min(\min(a_1,j_1), j_2, a_1+j_2-j_1)$. We subtract $j_2-a_2$ from each of the three quantities in $U$. Using  $\bar{j_2}  = a_2 +\min(a_1,j_1)-j_2 $, the first term becomes $\bar{j_2}$. For the second term, $j_2-(j_2-a_2)=a_2$. For the third term, subtracting $j_2-a_2$ produces $a_1+a_2-j_1$, and when $j_2>a_2$, this is $\bar{j_1}$. Altogether, we get $U-L=\min(\bar{j_2},a_2,\bar{j_1})$.
					
				\end{enumerate}
				
			\end{enumerate}
			
		\end{proof}
		\begin{remark}
			We conjecture that the $e$-regular multipartitions in a face with $I_0=\{1,2,\dots,t\}$ can be obtained from a path with residues $t,t-1,\dots,1,2,\dots,t,t-1,\dots,3,2,3,\dots,t-2,t-1,t-2,\dots$.
			The first sweep from $t$ to $1$ fills in
			the first column of each partition and the return from $1$ to $t$ fills in the first row. The $1$-corner  and $t$-corner partitions are now finished, and we continue to the second row and column for the partitions with corner in the interval $2,3,\dots, t-2$, continuing to remove the outer two residues at each cycle.  
		\end{remark}
		
		\section{THE SHAPE OF CANONICAL BASIS ELEMENTS}

		\begin{defn}
			The \textit{shape} of a canonical basis element $G(\mu)$ is the sequence of coefficients in the generating function $f_\mu(z)$ obtained from $G(\mu)$ by substituting a $1$ for each element of the natural basis and $z$ for each $v$. Since the powers of $v$ occurring in $G(\mu)$ are 
			non-negative, $f_\mu(z) \in \mathbb{Z}[v]$ is an integral polynomial with non-negative coefficients.
		\end{defn}
		
		In many cases, the shape is much more quickly and easily calculated than the massive canonical basis elements. For any weight,  the dimension of the weight space is at least as large as the number of distinct shapes.
		
		For each vertex of $\widehat P(\Lambda)$ of the form $\Lambda -j_i\a_i$, there is a unique path leading to the vertex, and thus the dimension of the corresponding weight space in $V(\Lambda)$ is one. We want to calculate the shape of any such canonical basis element.

		\subsection{Monotonic paths}

		We now give a recursion formula for the shape and solve it by giving the generating function in terms of known functions. We first need a few definitions. 
		
		A \textit{binary word} is a word such that each letter belongs to $\{0,1\}$. A binary word of length $a$ consisting entirely of $0$ will be denoted by $0_a$.
		
		An {\em inversion} in a binary word  $\pi$ is a pair of positions $(i,j)$ such that $1=\pi_i>\pi_j=0$ and $1\leq i<j\leq n$. 
		\begin{defn}\label{bin}	
			We let $\Inv(S)$ be the number of inversions in the binary word $S$. If $T, S$ are binary words of the same length such that the set of positions of digits $1$ in $T$ is a subset of the positions of digits $1$ in $S$, we say that  $T \subseteq S$.  In such a case, 
			we let $\Inv(T, S)$ be the relative inversion of the values $1$ of $T$ as a subset of the values $1$ of $S$, that is to say, the sum of the number of $1$ digits in $S-T$ before each digit $1$ in $T$.
			If $X \subset U \subset S$, then we exclude the digits in the word $S$ corresponding to $X$, and for each $1$ in $S-U$ take the sum for each position before it of 
			$1$ for each $0$ digit and $-1$ for each $1$ from $U$.
			The sum for all digits $1$ in $S-U$ will be denoted by  
			$\Inv(S,U,X)$. We let $\#1(S)$ denote the number of digits $1$ in the binary word $S$, so that $\#1(0_{a})=0$.
			
			For nonnegative integers $a,j$ 
			we let $B(a,j)$ denote the set of binary words of length $a$ with $j$ entries $1$ if $0 \leq j \leq a$ and let it be $\emptyset$ otherwise.
		\end{defn}
		
		In all the results below, we will frequently use Theorem 6.16 in Mathas \cite{M2}. The result there is stated for partitions rather than multipartitions, and Mathas is working with $e$-restricted rather than $e$-regular multipartitions, so in \cite{AOS1}, to take care of these differences, we gave a slightly different proof compatible with our set-up.  However,  except when the differences are important, we will simply quote Mathas.
		
		\begin{lem}\label{Mathas}[\cite{AOS2}, Lem. 4.2] If a multipartition $\mu$ has at least $t$ $i$-addable nodes, and if $\lambda$ is the result of adding $t$ $i$-addable nodes with binary word $S$ choosing among addable nodes, then $f^{t}_i(\mu)$ contains $\lambda$ with coefficient $v^{\Inv(S)}[t]_v!$.
		\end{lem}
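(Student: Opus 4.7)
The plan is a direct expansion of $f_i^t(\mu)$ as a sum over orderings of the $t$ additions, rather than a recursive induction. Since $\lambda$ is obtained from $\mu$ by adding exactly the addable $i$-nodes at positions $p_1 < \cdots < p_t$ (the positions of the digits $1$ in $S$), every contribution to the coefficient of $\lvert\lambda\rangle$ in $f_i^t(\mu)$ is indexed by a permutation $\sigma\in S_t$ prescribing the order in which these $t$ nodes are added, with a corresponding $v$-exponent determined by the $N$-statistics of Definition \ref{Fock} evaluated at the intermediate multipartitions $\mu_\sigma^{(j)}$ obtained after the first $j$ additions.

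The first step is to analyze how the $N$-statistic evolves along an ordering. When the node at position $p_{\sigma(l)}$ is added (for some $l<j$), the effect on $N(\mathfrak{n}_{p_{\sigma(j)}},i;\mu_\sigma^{(l)})$ is the following: if $p_{\sigma(l)}$ lies above $\mathfrak{n}_{p_{\sigma(j)}}$ in the orientation of Definition \ref{Fock}, then an addable $i$-node above is converted into a removable $i$-node above, shifting $N$ by $-2$; otherwise $N$ is unchanged. Summing over $j$, the total $v$-exponent from the ordering $\sigma$ therefore decomposes as
\[
\sum_{k=1}^t N(\mathfrak{n}_{p_k},i;\mu)\;-\;2\,\inv(\sigma),
\]
where $\inv(\sigma)$ denotes the usual number of inversions of $\sigma$ with respect to the above/below order inherited from the ambient ordering of addable $i$-nodes.

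The main obstacle is the combinatorial identification of the resulting sum with $v^{\Inv(S)}[t]_v!$. Invoking the classical quantum-permutation identity $\sum_{\sigma\in S_t} v^{2\inv(\sigma)} = v^{\binom{t}{2}}[t]_v!$, the coefficient of $\lvert\lambda\rangle$ in $f_i^t(\mu)$ equals
\[
v^{\sum_k N(\mathfrak{n}_{p_k},i;\mu)\;-\;\binom{t}{2}}\,[t]_v!,
\]
so it remains to verify $\sum_k N(\mathfrak{n}_{p_k},i;\mu) - \binom{t}{2} = \Inv(S)$. This reduces to a direct count: $\sum_k N(\mathfrak{n}_{p_k},i;\mu)$ tallies, over each marked position $p_k$, the pairs formed with an addable $i$-node above it (minus removable above); subtracting $\binom{t}{2}$ removes the overcount of pairs with both endpoints marked, leaving precisely the pairs $(p_k,j)$ with $S_{p_k}=1$ and $j$ unmarked — i.e., $\Inv(S)$ by Definition \ref{bin}. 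The only delicate point is to fix compatible conventions for ``above'' in Definition \ref{Fock} and in $\Inv$; once this is done the verification is an arithmetic exercise and the lemma follows.
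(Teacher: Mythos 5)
The paper does not actually prove this lemma: it is quoted from [\cite{AOS2}, Lem.~4.2], which in turn rests on Theorem~6.16 of Mathas \cite{M2}, so there is no internal argument to compare with; what you have written is the standard direct proof of the quoted result, and in substance it is correct. The expansion over the $t!$ orders of addition is legitimate because adding an $i$-node turns its own position from addable to removable and neither creates nor destroys any other addable or removable $i$-node (horizontally, vertically and diagonally adjacent positions carry other residues), so every ordering of the chosen nodes is admissible, these orderings are the only contributions to $\lvert\lambda\rangle$, and each previously added chosen node lying above the current one shifts the relevant $N$-statistic by exactly $-2$; combined with the inversion identity for $S_t$ (in the form $\sum_{\sigma}v^{-2\inv(\sigma)}=v^{-\binom{t}{2}}[t]_v!$) this gives the coefficient $v^{\sum_k N(\mathfrak n_{p_k},i;\mu)-\binom{t}{2}}[t]_v!$, as you state. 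Two points should be made explicit rather than waved at. First, your final identification $\sum_k N(\mathfrak n_{p_k},i;\mu)-\binom{t}{2}=\Inv(S)$ silently discards the ``removable $i$-nodes above'' part of $N$; it holds, and indeed the lemma as stated is only true, under the implicit hypothesis that no removable $i$-nodes of $\mu$ sit above the chosen positions (this is how the lemma is used in \S\ref{three}, e.g.\ when $f_1^{(j_1)}$ acts on multipartitions having no removable $1$-nodes), whereas your intermediate formula $v^{\sum_k N-\binom{t}{2}}[t]_v!$ is the correct general statement. Second, the convention issue you flag is genuine: Definition~\ref{bin} counts the $0$'s to the right of each $1$, while the exponent in Definition~\ref{Fock} (and the explicit computations in the paper) counts unchosen addable nodes \emph{above} each chosen node, so the binary word must be read with earlier positions corresponding to lower nodes; with that fixed, your counting argument closes the proof.
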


		\begin{defn}
			Define $(z;z)_j$ to be $\prod_{t=1}^j(1-z^t)$ with $(z;z)_0=1$, and $\binom{i}{j}_z=\frac{(z;z)_i}{(z;z)_j(z;z)_{i-j}}$ to be the Gaussian binomial coefficients.
		\end{defn}

		\begin{prop}\label{shape} Let $\Lambda=a_0 \Lambda_0+\dots+a_\ell \Lambda_{\ell}$. For some $i$ in the residue set $I$, suppose we are given 	
			$j_i$ with $0 \leq j_i \leq a_i$. Let $\mu$ be the unique $e$-regular
			multipartition of weight $\Lambda-j_i\alpha_i$. We set $a=a_i$, $j=j_i$.
			\begin{enumerate}
				\item The shape of the canonical basis element
				$G(\mu)$ is given by $(s(a,j,0),\dots, s(a,j,j(a-j))$ where $s(a,j,t)$ is a recursive function which is $0$ except for $0 \leq t \leq j(a-j)$, and satisfies the following recursion scheme:
				\[
				s(a,j,t)=s(a-1,j-1,t)+s(a-1,j,t-j).
				\]
				\noindent with initial conditions s(1,0,0)=s(1,1,0)=1.
				\item The generating function $f_\mu$ given by the above recursion scheme is, for all $0\leq j\leq a$,
				\[S(a,j)(z)=\sum_{t=0}^{j(a-j)}s(a,j,t)z^t=\binom{a}{j}_z,
				\]
				where $\binom{a}{j}_z$ is the Gaussian binomial coefficients. Thus, for all $0\leq j\leq a$ and $0\leq t\leq j(a-j)$, $s(a,j,t)$ is the number of binary words $\pi=\pi_1\pi_2\cdots\pi_j$ with $j-a$ digits $1$, $a$ digits $0$ and having $t$ inversions.
			\end{enumerate}
		\end{prop}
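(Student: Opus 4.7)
The plan proceeds in three stages: identify $G(\mu)$ as a divided power applied to the empty multipartition, expand it via Lemma \ref{Mathas}, and then read off both the generating function and the recursion.

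\emph{Identification of $G(\mu)$.} Since $\mu$ is obtained from the empty multipartition by $j$ successive applications of $\tilde f_i$ (each placing a single box in the next $i$-cogood partition), and the empty multipartition has no removable $i$-nodes but exactly $a$ addable ones, and no new $i$-addable nodes appear along the way (because the boxes $(2,1)$ and $(1,2)$ in an $i$-corner partition have residues $i\pm 1\neq i$), the standard formula relating divided powers to Kashiwara operators gives
\[
G(\mu) \;=\; f_i^{(j)}\,|~\rangle.
\]
Bar-invariance of $f_i^{(j)}|~\rangle$ together with the uniqueness of $\mu$ at its weight confirms the identification with the canonical basis element.

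\emph{Explicit expansion.} I apply Lemma \ref{Mathas} with $t=j$. For each binary word $S\in B(a,j)$, let $\lambda_S$ denote the multipartition with a box placed in each $i$-corner partition marked by a $1$ in $S$. The lemma yields
\[
f_i^{j}\,|~\rangle \;=\; [j]_v!\sum_{S\in B(a,j)} v^{\Inv(S)}\,|\lambda_S\rangle,
\]
so dividing by $[j]_v!$ gives
\[
G(\mu) \;=\; \sum_{S\in B(a,j)} v^{\Inv(S)}\,|\lambda_S\rangle.
\]
Substituting $v\mapsto z$ and each $|\lambda_S\rangle\mapsto 1$ in the definition of the shape produces
\[
f_\mu(z) \;=\; \sum_{S\in B(a,j)} z^{\Inv(S)} \;=\; \binom{a}{j}_z,
\]
the last equality being the classical interpretation of the Gaussian binomial coefficient as the inversion generating function over $B(a,j)$. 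This establishes part (2) and also the combinatorial description of $s(a,j,t)$ as the number of binary words in $B(a,j)$ with exactly $t$ inversions.

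\emph{Recursion.} Part (1) then follows by partitioning $B(a,j)$ according to the last letter. If $S_a=1$, deleting it produces an element of $B(a-1,j-1)$ with the same inversion count, contributing $s(a-1,j-1,t)$. If $S_a=0$, deletion produces an element of $B(a-1,j)$ whose inversion count is exactly $j$ less, since the terminal $0$ was preceded by $j$ ones; this contributes $s(a-1,j,t-j)$. Summing gives the claimed recursion, and the base cases $s(1,0,0)=s(1,1,0)=1$ are immediate from the singletons $S=0$ and $S=1$, both with zero inversions.

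The main obstacle is the identification $G(\mu)=f_i^{(j)}|~\rangle$, which requires that the conventions on signatures, cogood nodes, and the ordering of addable nodes used in Lemma \ref{Mathas} align with the labeling of binary words in $B(a,j)$. Once this alignment is in place, the remaining steps are direct applications of Mathas's formula and elementary combinatorics of Gaussian binomials.
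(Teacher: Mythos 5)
Your proof is correct, and it inverts the logical order of the paper's argument in a way worth noting. The paper first establishes the recursion $s(a,j,t)=s(a-1,j-1,t)+s(a-1,j,t-j)$ by analyzing how the coefficient of $v^t$ in $G(\mu)$ splits according to one digit of the binary word, and only then solves the recursion by an induction using the $q$-Pascal identity for $\binom{a}{j}_z$. You instead make explicit the identification $G(\mu)=f_i^{(j)}\lvert~\rangle$ (which the paper leaves implicit), obtain the full expansion $\sum_{S\in B(a,j)}v^{\Inv(S)}\lvert\lambda_S\rangle$ from Lemma \ref{Mathas} in one step, read off $f_\mu(z)=\binom{a}{j}_z$ from the classical inversion-statistic interpretation of the Gaussian binomial, and then deduce the recursion as a corollary of the last-letter decomposition of $B(a,j)$. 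Both arguments rest on the same key input (Lemma \ref{Mathas} and the inversion statistic), but your route dispenses with the inductive verification of the $q$-Pascal identity. Two small points. First, your last-letter decomposition is the one that actually matches the stated recursion under the paper's definition of inversion (a $1$ preceding a $0$): a trailing $1$ creates no inversions while a trailing $0$ creates $j$ of them; the paper's text speaks of the \emph{initial} digit, which under that same definition would yield the other $q$-Pascal identity, so your bookkeeping is the cleaner of the two. Second, your identification $G(\mu)=f_i^{(j)}\lvert~\rangle$ is asserted before the congruence $f_i^{(j)}\lvert~\rangle\equiv\lvert\mu\rangle\pmod v$ is actually available; that congruence follows from the expansion you compute in the next step (the unique inversion-free word is the one whose multipartition is the $e$-regular $\mu$ produced by the cogood-node rule), so the argument closes, but the two steps should be presented in the opposite order.
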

		
		\begin{proof}
			(1)  Let $\mu$ be the unique $e$-regular multipartition with weight $\Lambda-j_i\alpha_i$.
			The function $s(a_i,j_i,t)$ which gives the shape is a function counting all the multipartitions with coefficient $v^t$ in $G(\mu)$.
			
			For $a=1$, we have  $j=0,1$.  This means that the defect $j(a-j)=0$, so $t=0$.   For a  canonical basis element with defect $0$, $G(\mu)= |\mu \rangle$, so  $s(1,1,0)=s(1,1,0)=1$.  
			
			For $a>1$, any multipartition in $G(\mu)$ has singletons $(1)$ whose $j$ positions are determined by a binary word $S$.
			For the recursion step from $a-1$ to $a$, the  
			number of multipartitions with coefficient $v^t$ in $G(\mu)$ is the sum of those starting with $1$, for which the power is determined by the remaining $a-1$ elements of the sequence, and those starting with $0$. Let $S$ be a binary word starting with $0$ determining one of the multipartitions. Letting $S'$ be the characteristic sequence when we have removed the initial $0$, we know that $S'$ also contains $j$ values $1$.  We have $\Inv(S) =\Inv(S')+j$  because the initial $0$ adds $j$ to the power of $v$  determined by the remainder of the sequence, giving the desired recursion formula.
			
			(2)	 Note that $s(a,j,t)$ is defined for all $0\leq j\leq a$ and $0\leq t\leq j(a-j)$. (Otherwise $s(a,j,t)$ is defined to be $0$).
			
			To solve this recurrence relation, we define $S(a,j)(z)$ to be the generating function $\sum_{t=0}^{j(a-)j}s(a,j,t)z^t$. By multiplying the recurrence relation of $s$ by $z^t$ and summing over all possible values of $t$, we obtain
			\begin{align}\label{eqR1}
				S(a,j)(z)=S(a-1,j-1)(z)+z^jS(a-1,j)(z),
			\end{align}
			with $S(1,0)(z)=S(1,1)(z)=1$. 
			
			Now, we show that $S(a,k)(z)=\binom{a}{j}_z$. We proceed with the proof by induction on $a+j$. Clearly, $S(1,0)(z)=\binom{1}{0}_z$ and $S(1,1)(z)=\binom{1}{1}_z$. Assume that the claim holds for all $a,j$ such that $a+j\leq s-1$. By \eqref{eqR1} and induction hypothesis, we have 
			\begin{align*}
				S(a,j)(z)&=S(a-1,j-1)(z)+z^jS(a-1,j)(z)=\binom{a-1}{j-1}_z+z^j\binom{a-1}{j}_z\\
				&=\frac{(z;z)_{a-1}}{(z;z)_{a-j}(z;z)_{j-1}}+\frac{z^j(z;z)_{a-1}}{(z;z)_{a-1-j}(z;z)_{j}}\\
				&=\frac{(z;z)_{a-1}}{(z;z)_{a-j}(z;z)_{j}}(1-z^j+z^j(1-z^{a-j}))\\
				&=\frac{(z;z)_{a}}{(z;z)_{a-j}(z;z)_{j}}=\binom{a}{j}_z,
			\end{align*}
			which ends the induction. The rest follows immediately from the definitions of the Gaussian binomial coefficients (for instance, see \cite{EE}).
		\end{proof}
		
		Note that since the Gaussian binomial coefficients are symmetric, we see that the polynomial $S(a,j)(z)=\sum_{t=0}^{j(a-j)}s(a,j,t)z^t$ is symmetric too, namely $S(a,j)(z)=S(a, a-j)(z)$, for all $0\leq j\leq a$.
		Moreover, By Prop. \ref{shape}, we see that the shape of the canonical basis element $G(\mu)$ is given by the coefficient of the Gaussian binomial coefficients $\binom{a_i}{j_i}_z$.
		
		\subsection{Paths $i^{j_i}k^{j_k}$}
		
		We now consider the shape of the canonical basis element reached by a path consisting of operating $j_i$ times by $f_i$ with $j_i \leq a_i$, and $j_k$ times by $f_k$ with $j_k \leq a_k$. We first consider the case where $k$ is not congruent to $i+1$ or $i-1$ modulo $e$. Cases with $i,k$ adjacent will be special cases of the paths that will be treated in \S \ref{three}.
		\begin{defn} 
			The \textit{support} of a multipartition is the set of residues in nodes of non-empty partitions.  If $\mu$ and $\nu$ are two multipartitions for the same multicharge $s$ which have disjoint support, then we define $\mu \wedge \nu$ to be the combined multipartition equal to $\mu$ on the support of $\mu$ and equal to $\nu$ on the support of $\nu$. If two canonical basis elements have disjoint support, we can define $G(\mu) \wedge G(\nu)$ by linearity.
		\end{defn}
		
		\begin{lem}	If $i$ and $j$ are non-adjacent elements of the index set $I$, then
			\begin{enumerate}
				\item If $\mu_i$ is the $e$-regular multipartition for the path $i^{j_i}$ and
				and $\mu_k$ for the path $k^{j_k}$, then for the vertex with content $c_i=j_i$ and $c_j=j_k$ but otherwise zero, there is a unique canonical basis element 
				$G(\mu_i) \wedge G(\mu_k)$ and  the shape of the element has generating polynomial $f_{\mu_i}(z)f_{\mu_k}(z)$.
				\item There is only one e-regular multipartition $\mu$ with the weight of $\Lambda-j_i\Lambda_i-j_k\Lambda_k$, given by $\mu_i \wedge \mu_j$.	
			\end{enumerate}	
		\end{lem}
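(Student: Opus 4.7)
The plan is to exploit non-adjacency to decouple the actions of $f_i$ and $f_k$ on the Fock space, reducing both claims to the single-residue analysis of Proposition \ref{shape}. Since $k\not\equiv i,i\pm 1\pmod e$, the Cartan entry $a_{ik}$ vanishes and $f_i,f_k$ commute in the quantum group by the Serre relations. More importantly, because the residues appearing along the border of a freshly placed $k$-node are drawn from $\{k,k\pm 1\}$, which is disjoint from $\{i,i\pm 1\}$, any multipartition $\eta$ supported only near residue $k$ carries no $i$-addable or $i$-removable nodes and contributes nothing to the sign-exponent $N(\mathfrak n,i)$ of Definition \ref{Fock}. The key decoupling identity
\[
f_i\bigl(|\lambda\wedge\eta\rangle\bigr)\;=\;f_i\bigl(|\lambda\rangle\bigr)\wedge|\eta\rangle
\]
then follows, and it lifts to divided powers by iteration.

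I would prove (2) first. Any $e$-regular multipartition at the specified weight is reached from the empty multipartition by some sequence of $\tilde f_i$'s and $\tilde f_k$'s; by the decoupling, every $i$-cogood node lies in an $i$-corner partition and every $k$-cogood node in a $k$-corner partition, so the two sub-sequences operate on disjoint blocks and may be performed in either order. Proposition \ref{shape} produces the unique $e$-regular multipartition $\mu_i$ from $\tilde f_i^{\,j_i}|\,\rangle$ and the unique $\mu_k$ from $\tilde f_k^{\,j_k}|\,\rangle$, so their combination $\mu_i\wedge\mu_k$ is the unique candidate.

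For (1), applying the decoupling identity iteratively to divided powers gives
\[
f_i^{(j_i)}f_k^{(j_k)}|\,\rangle\;=\;\bigl(f_i^{(j_i)}|\,\rangle\bigr)\wedge\bigl(f_k^{(j_k)}|\,\rangle\bigr).
\]
The two factors on the right are the canonical basis elements $G(\mu_i)$ and $G(\mu_k)$, since Proposition \ref{shape} shows that for a single residue the divided power of the generator acting on the vacuum is bar-invariant and already expresses $G(\mu_i)$. The left-hand side is therefore bar-invariant, lies in $\mathcal F^s_{\mathcal A}$, and, by the uniqueness established in (2), has $|\mu_i\wedge\mu_k\rangle$ as its sole leading natural basis term with coefficient $1$. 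The standard characterization of canonical basis elements as the unique bar-invariant vectors congruent to their leading term modulo $v$ then forces $G(\mu_i\wedge\mu_k)=G(\mu_i)\wedge G(\mu_k)$. Expanding
\[
G(\mu_i)\wedge G(\mu_k)\;=\;\sum_{\lambda,\eta}d_{\lambda,\mu_i}(v)\,d_{\eta,\mu_k}(v)\,|\lambda\wedge\eta\rangle
\]
and substituting $z$ for $v$ and $1$ for each natural basis element yields the factorization $f_{\mu_i\wedge\mu_k}(z)=f_{\mu_i}(z)\,f_{\mu_k}(z)$ asserted in (1).

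The principal technical obstacle is the careful combinatorial verification of the decoupling identity, where one must check both that $\eta$ has no $i$-addable or $i$-removable nodes and that the statistic $N(\mathfrak n,i)$ restricted to the $i$-block of $\lambda$ receives no contribution from $\eta$. This rests entirely on the non-adjacency hypothesis $|i-k|\not\equiv 0,\pm 1\pmod e$ together with the Mathas convention for ordering partitions in the multipartition, but once it is in place the structural conclusions follow directly from the single-residue results already in hand.
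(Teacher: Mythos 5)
Your proof is correct and follows essentially the same route as the paper: both rest on the observation that non-adjacency forces the $f_i$- and $f_k$-actions to operate on disjoint blocks of corner partitions (no $i$-addable or $i$-removable nodes among the $k$-corner singletons and vice versa), reducing both claims to the single-residue computation of Proposition \ref{shape} and Lemma \ref{Mathas}. The only difference is in how part (1) is finished: the paper argues by induction on $j_i$ with a direct coefficient computation via binary words, whereas you invoke bar-invariance of $f_i^{(j_i)}f_k^{(j_k)}\lvert~\rangle$ together with the uniqueness characterization of canonical basis elements, which makes explicit a justification the paper leaves implicit.
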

		
		\begin{proof}
			\begin{enumerate}
				\item Induction on $j_i$.  For $j_i=0$, we have settled this case above, that the multipartition $\mu_k$ for any given $j_k$ has support only on the $k$-corner partitions and $G(\mu_k)$ is supported entirely on these partitions.  When we act on $G(\mu_k)$ by the operator $f_i$, each $v^t\vert \nu \rangle$ is changed only in its $i$-corner partitions.  For each possible binary word $S$, let $\mu_S$ be the result of taking the partition designated by the single $1$ in $S$.  For each $S$, we will then get $v^{\Inv(S)}v^t |\mu_S \wedge \nu \rangle$.  By the distributive law, this is equivalent to $(\sum_{S} v^{\Inv(S)})  |\mu_S \wedge \nu>$. Because the operation of $f_i$ and $f_k$ are independent, along any path leading to the given vertex, 		
				we will have $G(\mu)=G(\mu_i)G(\mu_k)$.  Then $f_{\mu}(z)$ is obtained by substituting $1$ for all the multipartitions in $G(\mu)$ and replacing $v$ by $z$.
				\item The $e$-regular multipartition with the weight of  $\Lambda-j_i\alpha_i$ consists, by the signature method, of $j_i$ copies of $(1)$ in the $i$-corner partitions and this will give the multipartition $\mu_i$. Similarly the $e$-regular partition $\mu_k$ with weight  $\Lambda-j_k \alpha_k$ will have $j_k$ copies of $(1)$ in the $k$-cornered partitions. There are no $k$-addable nodes in the $i$-corner partitions because the only addable nodes to $(1)$ have residue $i+1$ in the row and $i-1$ in the column. Similarly, there are no $i$-addable nodes in the $k$-corner partitions. Thus, whatever path we choose, if we arrive at a content with only $c_i$ and $c_k$ non-zero, the $e$-regular partitions from that path will have $c_i$ copies of $(1)$ in $i$-corner partitions and $c_k$ copies of $(1)$ in $k$-corner partitions.    
			\end{enumerate}	
		\end{proof}
		
		\subsection{Paths $k^{u}i^{j_i}k^{j_2-u}$for $i,k$ adjacent}\label{three} 
		To make our notation compatible with Prop. \ref{e-reg}, we will take $i=1$ and $k=2$. This can be arranged through a cyclic renumbering of the residues.
		In Prop. \ref{e-reg}, we showed that the $e$-regular multipartitions in the upper part of the face are 
		reached by paths of the form $2^{u}1^{j_1}2^{j_1-u}$, where $u=\min(j_1-w,j_2-w)$. For paths with  $2^u$ or $2^u1^{j_1}$, we will find the shape for the Fock space element associated with each path. 
		For the full path, we must find the Fock space element and get the shape from there, but first, we need a notation for the multipartitions. We define the following binary words as in Def \ref{bin}.
		\begin{defn}\label{TWSX} Given $j_1,j_2$ with $0 \leq j_1,j_2 \leq a_1+a_2$ and given $u$ with $0 \leq u \leq \min(a_2, j_2)$, we choose binary words $T$, $W$, $S$, $X$ as follows.\\
			\begin{itemize}
				\item We chose a binary word $T \in B(a_1,t)$, where $t$ is an integer
				with $\max(0,j_1-u) \leq t \leq 
				\min(a_1,j_1)$ 
				.
				\item   We choose a word 
				$X \in B(a_2,j_1-t)$. 
				\item We choose a binary word
				$W \in B(a_1,w)$ , $W \subseteq T$, where   $w$ is an integer with $\max(0,j_2-a_2) \leq w \leq \min(t,j_2-u)$, 
				
				\item We choose a binary word $S \in  B(a_2,j_2-w)$, with $X \subseteq S$. 
			\end{itemize}
			The multipartition 
			\[
			\tau(T,W,S,X)
			\] is the multipartition with partitions 
			$(2)$ in positions given by $W$, $(1)$ in positions given by $T-W$, $(1,1)$ in positions given by $X$, and $(1)$ in positions given by $S-X$, with all remaining partitions equal to $\emptyset$.
		\end{defn}
		
		\begin{prop} For $e \geq 3$ and $\Lambda=a_0\Lambda_0+\dots+a_\ell\Lambda_{\ell}$, on the face for  interval  $I_0=\{1,2\}$  at the vertex with content $(j_1,j_2)$, we set $t=min(j_1,a_1)$,choose an integer $w$ satisfying
			$\max(0,j_2-a_2) \leq w \leq \min(t,j_2, a_1+j_2-j_1) $ and	
			denote by $\mu_w$ the $e$-regular multipartition with $w$ copies of the partition $(2)$. Set $u=\min(j_1-w,j_2-w)$.
			Consider the Fock space element 
			$F(j_1,j_2,u)=f_2^{(j_2-u)}f_1^{(j_1)}f_2^{(u)}(\lvert ~ \rangle) $ at the endpoint of the  path $2^{u}1^{j_1}2^{j_2-u}$.
			\begin{enumerate}
				\item At the end of the first subpath  $2^{u}$, the generating function of the shape is $f(z)=\binom{a_2}{u}_z$.
				\item At the end of the subpath  $2^{u}1^{j_1}$ the generating function of the shape is f(z)g(z), where $g(z)=\binom{a_1+u}{j_1}_z$. If $u=j_2$, this is the end of the path and thus the shape is the product of two Gaussian binomials. If $u=0$, then the shape is $g(z)$.
				\item \label{3} In the notation of  Def. \ref{TWSX},
				\[F(j_1,j_2,u)=\sum_{T,W,S,X}
				v^{E(T,W)}
				\sum_{X\subset U\subset S}v^{E(S,U,X)}\lvert \tau(T,W,S,X)>,
				\]
				where 
				\begin{itemize}
					\item $E_X(t)=(a_1-t)(j_1-t),$
					\item $E_S(t,w)=(t-w)(j_2-u-w),$
					\item $E(T,W)= \Inv(T)+\Inv(W,T)+E_X(t)+E_S(t,w)$
					\item $E(S,U,X)={\Inv(U)}+\Inv(X,U)+\Inv(S,U,X).$
					
				\end{itemize}
				\item The Fock space element $F(j_1,j_2,u)$ is a linear combination of canonical basis elements $G(\mu_{w_1})$ for $w_1 \geq w$ with coefficents in $\mathbb{Z}[v+v^{-1}]$. If we calculate canonical basis elements starting with maximal $w$, and for each new $w$ we strip off those canonical basis elements with 
				$w_1 > w$, we are left with the canonical basis element element $G(\mu_w)$.
				
			\end{enumerate} 
			
		\end{prop}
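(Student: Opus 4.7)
The plan is to prove the four parts sequentially: (1)--(3) by direct calculation using Lemma~\ref{Mathas} and the Gaussian binomial identification of Proposition~\ref{shape}, and (4) via the canonical basis formalism. For Part (1), Lemma~\ref{Mathas} yields $f_2^{(u)}\lvert~\rangle = \sum_{S \in B(a_2, u)} v^{\Inv(S)} \lvert \mu_S\rangle$, with $\mu_S$ having a $(1)$ in each 2-corner position marked by $S$; distinct $S$ give distinct multipartitions, so the shape generating polynomial equals $\sum_S z^{\Inv(S)} = \binom{a_2}{u}_z$ by Proposition~\ref{shape}(2). Part (2) is analogous: each intermediate $\lvert \mu_S\rangle$ has $a_1+u$ addable 1-nodes ($a_1$ from empty 1-corner partitions and $u$ from the 2-corner $(1)$'s that extend to $(1,1)$'s), so by Lemma~\ref{Mathas} another factor $\binom{a_1+u}{j_1}_z$ is picked up, and since the pair of binary words $(S,Q)$ determines the final multipartition uniquely, the overall shape generating function is the product $\binom{a_2}{u}_z\,\binom{a_1+u}{j_1}_z$.

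For Part (3), I would apply $f_2^{(j_2-u)}$ to the output of Part (2) and bookkeep the inversions from all three divided-power applications. The addable 2-nodes split into two groups: $t$ nodes of shape $(1,2)$ in the 1-corner $(1)$'s indexed by $T$ (whose addition produces $(2)$'s), and $a_2-u$ nodes of shape $(1,1)$ in the empty 2-corner positions (whose addition creates new $(1)$'s). The four parameters $(T,W,S,X)$ of Definition~\ref{TWSX} then describe the final multipartition. The essential subtlety is that a given $\tau(T,W,S,X)$ can be reached from several first-step configurations, encoded by the intermediate variable $U$ with $X \subseteq U \subseteq S$ and $\lvert U\rvert=u$, where $U$ records which positions of $S$ were already filled after the first $f_2^{(u)}$. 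Summing the inversion contributions over all such $U$ and combining with the contributions from $T,W,S,X$ collapses to the stated $E(T,W)$ and $E(S,U,X)$ formulas.

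For Part (4), $F(j_1,j_2,u) \in \mathcal{F}^s_{\mathcal{A}}$ is bar-invariant since $\lvert~\rangle$ is bar-fixed and the divided powers commute with the bar involution; thus in the canonical basis expansion $F = \sum_{w_1} c_{w_1}(v)\,G(\mu_{w_1})$ each $c_{w_1}$ lies in $\mathbb{Z}[v+v^{-1}]$. By Proposition~\ref{e-reg} the path $2^u 1^{j_1}2^{j_2-u}$ with $u=\min(j_1-w,j_2-w)$ selects an $i$-cogood node at each step and terminates at $\mu_w$, so $F \equiv \lvert \mu_w\rangle \pmod{v}$, forcing $c_w(0)=1$ and $c_{w_1}(0)=0$ for $w_1 \neq w$. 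To conclude $c_{w_1}=0$ for every $w_1<w$, I would use that the $\mu_{w_1}$ are totally ordered by the dominance order of Definition~\ref{dom}, with $\mu_{w_1} \triangleleft \mu_{w_2}$ iff $w_1<w_2$ (an extra $(2)$ in place of $(1)$ concentrates mass strictly higher), and induct downward from $w_1=w-1$: the coefficient of $\lvert \mu_{w_1}\rangle$ in $F$, computed explicitly from Part (3), must match the sum over $w_2>w_1$ of $c_{w_2}$ times the $\lvert \mu_{w_1}\rangle$-coefficient of $G(\mu_{w_2})$, and this consistency together with $c_{w_1}(0)=0$ and bar-invariance forces $c_{w_1}=0$. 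The strip-off procedure then computes $G(\mu_w)$ iteratively, starting from $w=w_{\max}$ where no higher $w_1$ exists and $F = G(\mu_{w_{\max}})$, and for each smaller $w$ subtracting the previously constructed $G(\mu_{w_2})$ for $w_2>w$ from $F(j_1,j_2,u_w)$ with coefficients read off from Part (3).

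The main obstacle is Part (3): the three-step inversion bookkeeping, and in particular the identification of the inner $U$-sum as the correct multiplicity, demand a careful enumeration of addable nodes and their ordering, including the implicit constraint $\lvert U\rvert=u$. A secondary difficulty in Part (4) is the downward induction that rules out $c_{w_1}$ for $w_1<w$, which uses the explicit formula from Part (3) together with the dominance triangularity of the canonical basis.
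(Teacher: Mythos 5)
Your outline for parts (1) and (2) matches the paper's argument (Lemma \ref{Mathas} plus the Gaussian-binomial identification, and the count of $a_1+u$ addable $1$-nodes), though you should also verify that $j_1\leq a_1+u$ so that Lemma \ref{Mathas} actually applies to $f_1^{(j_1)}$; the paper derives this from $w\leq a_1+j_2-j_1$ and $u=\min(j_1-w,j_2-w)$. The genuine gap is part (3), which you yourself flag as "the main obstacle" but then dispose of with "collapses to the stated $E(T,W)$ and $E(S,U,X)$ formulas." That collapse is the entire content of the proof and cannot be obtained by quoting Lemma \ref{Mathas} term by term on the intermediate states: after the segment $2^u1^{j_1}$ the intermediate multipartitions $\tau(T,0_{a_1},U,X)$ carry \emph{removable} $2$-nodes (the singletons in positions of $U-X$), so the exponent contributed by $f_2^{(j_2-u)}$ is a signed count, not a count of inversions among addable nodes only. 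This is exactly where the possibly negative term $\Inv(S,U,X)$ and the quadratic corrections $E_X(t)=(a_1-t)(j_1-t)$ and $E_S(t,w)=(t-w)(j_2-u-w)$ come from. The paper establishes (3) by induction on $y=j_2-u$: it applies a single $f_2$ to the level-$(y-1)$ expression, splits the coefficient of $\tau(T,W,S,X)$ into a $W$-varying part $C_W=v^{\Inv(W,T)+E_S(t,w)}v^{r}[w]_v$ and an $S$-varying part $C_S=v^{\Inv(S,U,X)+E_S(t,w)}v^{-w}[r]_v$, and uses $v^{r}[w]_v+v^{-w}[r]_v=[y]_v$ (with $w+r=y$) to divide out the quantum integer. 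None of this bookkeeping appears in your proposal, so part (3) is a plan rather than a proof.

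Part (4) also needs repair. The coefficients $c_{w_1}$ lie in $\mathbb{Z}[v+v^{-1}]$ and can genuinely involve negative powers (the paper's example has $c_{w_1}=v+v^{-1}$), so the assertions "$F\equiv\lvert\mu_w\rangle\pmod v$" and "$c_{w_1}(0)=0$" are not well posed, and your downward induction is only sketched and depends on the unproven part (3). The paper's route is more direct: using the explicit exponents of (3) together with $\Inv(T)=\Inv(W_0,T)=\Inv(X,U)=0$ for $e$-regular targets and the inequality $u-x\leq t-w<t-w_0$, it shows that for every $w_0<w$ the coefficient of $\lvert\mu_{w_0}\rangle$ in $F(j_1,j_2,u)$ satisfies $E_{S_0}(t)+E(S_0,U,X)>0$, i.e.\ involves only strictly positive powers of $v$, which rules out $\mu_{w_0}$ as the leading term of any $G(\mu_{w_0})$ occurring with a nonzero coefficient in $\mathbb{Z}[v+v^{-1}]$; a companion computation shows the coefficient of $\lvert\mu_w\rangle$ contains the constant $1$. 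Your dominance-order observation ($\mu_{w_1}\trianglelefteq\mu_{w_2}$ for $w_1<w_2$) is correct and consistent with this, but by itself it does not exclude the lower $w_1$; the positivity estimate is the missing ingredient.
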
	
		\begin{proof}
			\begin{enumerate}
				\item 	We start the path choosing $u \leq a_2$ from among the  $2$-corner partitions, of which there are $a_2$, in which to insert $(1)$, with the generating function $f(z)$ given in Prop. \ref{shape}.
				For each such choice, we denote by $U \in B(a_2,u)$ the corresponding binary word. The coefficient of the corresponding multipartition $\tau(0_{a_1},0_{a_1},U,0_{a_2})$ is $v^{\Inv(U)}$.
				\item From the bound $w \leq a_1+j_2-j_1$, we have 
				$j_1-a_1 \leq j_2-w$. Since $w \leq a_1$, we also have $j_1-a_1 \leq j_1-w$. We chose $u = \min(j_1-w,j_2-w)$, so that $j_1-a_1 \leq u$ and thus $j_1 \leq a_1+u$.
				Now we have $a_1+u$ addable $1$-nodes, of which we will choose $j_1$.  
				Of these, some will add $1$-corner partitions, and some will be bottom nodes of partitions of the form $(1,1)$. The precise locations of the $2$-corner partitions $(1)$ do not affect the choices, so the generating function of the shape would be the product of two Gaussian binomials, the first being the polynomial $f(z)$ in (1) of the proposition and the second being of the Gaussian binomial $g(z)=\binom{a_1+u}{j_1}_z$.
				If $u=0$, then $f(z)=1$, so this product is just $g(z)$.
				
				\item We defined $u=\min(j_1-w,j_2-w) \leq j_2-w$ and $w \geq 0$, so $u \leq j_2$. We construct the Fock space element $F(j_1,j_2,u)$  by induction on $y=j_2-u$, starting with $y=0$.  We first calculate the coefficient of $\tau'=\tau(T,0_{a_1}, U, X)$, a typical multipartition whose corresponding natural basis element $\lvert \tau' \rangle$ appears in $f_2^{(j_1)} \left (f_1^{(u)}\right )(\lvert ~ \rangle) $. We have chosen a binary word  $T \in B(a_1,t)$ which indicates the positions of the 
				$1$-corner partitions of form $(1)$, and 
				$X \in B(a_2,j_1-t)$, with $X \subseteq U$ is the binary word representing the locations of the $2$-corner partitions $(1,1)$. The numbers $t$ and $j_1-t$ are nonnegative, and in addition $t\leq a_1$ while $j_1-t \leq u$. This gives the bounds for $t$ which we quoted in Def. \ref{TWSX}.
				
				We let $\widehat{X}$ be a binary word of length $u$ which is $0$ except at the relative positions of the $1$-values of $X$ among the $1$ values of $U$. By Lemma \ref{Mathas}, the coefficient of $\tau'$ is the number of inversions in the join of $T$ and $\hat{X}$, which we now calculate.
				
				The number of inversions is the sum of the number of $0$ digits to the left of any $1$, so in counting the number of inversions for the digits in $T$, the digits of $\widehat{X}$ are irrelevant and we just get $\Inv(T)$.  For each digit $1$ in 
				$\widehat{X}$, we add together the $a_1-t$ digits $0$ in $T$, together with the $0$ digits proceeding that $1$ in $\widehat X$.  Since there are $j_1-t$ digits $1$ in $\widehat X$, we get altogether 
				$(a_1-t)(j_1-t)+\Inv(\hat{X})$.  We have denoted 	$(a_1-t)(j_1-t)$ by $E_X(t)$ in the statement of the proposition and we have $\Inv(\widehat{X})=\Inv({X},U)$ by Def. \ref{bin}. When we add in the coefficient $v^{\Inv(U)}$ from the placement of $U$ in the $2$-corner partitions, we get a total exponent $\Inv(U)+\Inv(T)+E_X(t)+\Inv(X,U)$.
				
				To shorten the formula, let us define
				\[
				E(T,W,S,U,X)=E(T,W)+{E(S,U,X)}.
				\]	
				If $j_2=u$, then $W=0_{a_1}$ so $\Inv(W,T)=0$ and $S=U$, so that $\Inv(S,U,X)=0$ and $E_S(t,0) =0$.
				\begin{align*}
					E(T,W,S,S,X)=&E(T,W)+ {E(S,S,X)},\\
					=& \Inv(T)+\Inv(W,T)+E_X(t)+E_S(t,w)\\
					+&{\Inv(S)}+\Inv(X,S)+\Inv(S,S, X),\\
					=&\Inv(T)+E_X(t)+ {\Inv(S)}+\Inv(X,S),\\ 
				\end{align*}
				
				This is precisely the exponent we get when we substitute $S$ for $U$ in the total exponent  $\Inv(U)+\Inv(T)+E_X(t)+\Inv(X, U)$ calculated above. Thus we have established (3) when $j_2-u=0$.  
				
				We now proceed by induction on 
				$y=j_2-u$. Assume we have proven (\ref{3}) for $y-1 \geq 0$ and want to prove it for $y$. If $w \neq 0$, we let $W_1, W_2, \dots W_w$ be the binary words obtained by successively turning one of the digits $1$ of $W$ to $0$. For each $U$ with  $X\subset U\subset S$, letting $r$ be the number of digits $1$ in $S-U$, we let $S_1^U, S_2^U, \dots S_r^U$ be the binary words obtained by setting one of the digits $1$ in $S-U$ to $0$. The integer $r$ is independent of $U$ and together $w+r=y$. We now act on 
				
				\[\sum_{X\subset U\subset S}\sum_{i=1}^w v^{E(T, W_i,S,U,X)}\lvert \tau(T,W_i,S,X)>
				+ \sum_{j=1}^r
				v^{E(T,W,S_j^U,U,X)}\lvert \tau(T,W,S_j^U,X)>
				\]
				by the operator $f_2$ and isolate the term with multipartition
				$\tau=\lvert \tau(T,W,S,X)>$.  Since we have acted $y$ times by $f_2$ and have divided by the factorial $[y-1]_v!$, the coefficient should be divisible by $[y]_v$. 
				
				For the calculation of the first part with the binary word $W_i$, all factors of $E(T, W_i, S, U, X)$ are fixed and identical to those of $E(T, W, S, U, X)$ except that we have $E_S(t, w-1)$ instead of $E_S(t,w)$, $j_2'=j_2-1$ and we have a variation in  $E(W_i, T)$. With $S$ fixed, 
				\begin{align*}
					E_S(t,w-1)-E_S(t,w)=&
					-(t-(w-1))(j_2-1-u-(w-1))\\&-(t-w)(j_2-u-w)\\
					=&j_2-u-w\\
					=&y-w=r
				\end{align*}
				
				For each $i$, there are $i-1$ digits $1$ of $W$ before the vacant position in $W_i$, and some number $N_i$ of digits $0$ from $T-W$, with
				$\Inv(W,T)=\sum_{i=1}^w N_i$. When we calculate $\Inv(W_i,T)$, each of the
				$w-i$ integers $j>i$ contributes $N_j+1$ to $\Inv(W_i,T)$ because the empty spot in position 
				$i$ of $W$ gives an extra $0$. Altogether
				\[
				\Inv(W_i,T)=\Inv(W,t)-N_i+w-i.
				\] 
				
				Thus $\Inv(W_i,T)+N_i=\Inv(W,T)+w-i$.  When we operate on $\tau(T,W_i,S,X)$ by $f_2$, the $i$-dependent part of the coefficient of $\tau$
				will be $\Inv(W_i,T)+E_S(t, w-1)$, to which we add $N_i$ for the addable $2$-nodes and subtract $(i-1)$ for the removable $2$-nodes.  Summing over $i$ we get a $W$-dependent coefficient $C_W$. 
				\begin{align*}
					C_W = &\sum_{i=1}^w  v^{\Inv(W_i,T)+N_i-(i-1)+E_S(t, w-1)}\\
					=&\sum_{i=1}^wv^{\Inv(W,T)+(w-i)-(i-1)+(E_S(t,w)+r)}\\
					=&v^{\Inv(W,T)+E_S(t,w)}v^{r}\sum_{i=1}^wv^{(w+1-2i)}\\
					=&v^{\Inv(W,T)+E_S(t,w)}v^{r}[w]_v\\ 	 
				\end{align*}
				
				Now we fix $U$ and calculate the part of the coefficient which varies as we omit 
				one of the $r$ digits $1$ located in the positions of $S-U$. Since $U$ is fixed, we shorten $S^U_j$ to $S_j$. The variation is in $E_{S_j}(t,w)$ and in $\Inv(S_j,U,X)$. We will need
				
				\begin{align*}
					E_S(t,w)-E_{S_j}(t,w)=&
					(t-w)(j_2-u-w)-(t-w)(j_2-u-1-w)\\
					=&t-w\\
				\end{align*}
				
				For each $j$, let $N_j^+$ be the number of addable nodes before the digit $1$in position $j$ of $S-U$ and let $N_j^-$ be the number of digits $1$ of $U-X$ before position $j$.
				By definition, $\Inv(S,U,X)= \sum_{j=1}^r (N_j^+-N_j^-)$.  In $\Inv(S_j,U,X)$, we are not only missing the term $N_j^+-N_j^-$, but if we let $\bar N_i^+,\bar N_i^-$ be the corresponding numbers used to calculate $\Inv(S_j,U,X)$ for $i \neq j$, we also have $\bar N_i^+=\bar N_i^++1$ for $i<j$ and $\bar N_i^-=N_i^--1$ for $i>j$. Thus
				\[
				\Inv(S,U,X)=\Inv(S_j,U,X)+N_j^+-N_j^-+j-1-(r-j).
				\] 
				When we act by $f_2$, we count up all the addable nodes before position $j$, which is $N_j^++(t-w)$, and subtract the number of removable nodes, which is $N_j^-+w$. Thus the $S$ variable part of the coefficient of 
				$\tau(T,W,S_j,X)$ is multiplied by $v^{N_j^+-N_j^-+t-2w}$. Summing over $j$,
				we get that the factor in the coefficient of $\tau$ depending on $S$ is 
				\begin{align*}
					C_S=&\sum_{j=1}^r v^{\Inv(Sj,U,X)+N_j^+-N_j^-+(t-2w)+E_{S_j}(t,w)}\\
					=&\sum_{j=1}^r v^{\Inv(S,U,X)-r+2j-1+(t-2w)+(E_{S}(t,w)-(t-w))}\\
					=& v^{\Inv(S,U,X)+E_S(t,w)-w}\sum_{j=1}^r v^{-r+2j-1}\\
					=& v^{\Inv(S,U,X)+E_S(t,w)}v^{-w}[r]_v
				\end{align*}
				
				Still keeping $U$ fixed, we add together the coefficient of $\tau$ coming from the variation of $W$ and $S$.
				\begin{align*}
					C=&v^{\Inv(U)+\Inv(T)+\Inv(X,T)+E_X(t) }
					\left [v^{\Inv(S,U,X)}C_W+v^{\Inv(W,T) }C_S\right ]\\
					=&v^{\Inv(U)+\Inv(T)+\Inv(X,T)+E_X(t)+E_S(t,w)+\Inv(S,U,X)+\Inv(W,T) }
					\left [v^{r}[w]_v+v^{-w }[r]_v\right ]\\
					=&v^{\Inv(U)+\Inv(T)+\Inv(X,T)+E_X(t)+E_S(t,w)+\Inv(S,U,X)+\Inv(W,T) }
					[y]_v 
					\\
				\end{align*}
				
				When we divide out by $[y]_v$ to get the divided power, we have the desired exponent.
				Finally, we take the sum over all $U$ to get the  result in the proposition:

				\[\sum_{T,W,S,X}
				v^{E(T,W)}
				\sum_{X\subset U\subset S}v^{E(S,U,X)}\lvert \tau(T,W,S,X)>
				\]
				where 
				\begin{itemize}
					\item $E_X(t)=(a_1-t)(j_1-t),$
					\item $E_S(t,w)=(t-w)(j_2-u-w),$
					\item $E(T,W)= \Inv(T)+\Inv(W,T)+E_X(t)+E_S(t,w)$,
					\item $E(S,U,X)={\Inv(U)}+\Inv(X,U)+\Inv(S,U,X).$
					
				\end{itemize}
				
				For a compacter representation of the formula, we may define
				\[
				a(T,W,,S,X)=v^{E(T,W)}\sum_{X\subset U\subset S}v^{E(S,U,X)},
				\]
				and write 
				
				\[\sum_{T,W,S,X}
				a(T,W,S,X)\lvert \tau(T,W,S,X)>.
				\]
				\item In Prop. \ref{e-reg}, we determined that all the 
				$e$-regular multipartitions for $(j_1,j_2)$ have 
				$w$ satisfying $\max(0,j_2-a_2) \leq w \leq \min(a_1, j_1,j_2, a_1+j_2-j_1)$. We are given a specific value of $u$, which corresponds to some $w$ in the given range.

				Now choose a value $w_0<w$ in this range, if such exists, and let $u_0=\min(j_1-w_0, j_2-w_0)$, so that we get 
				$u_0 > u$.  We want to show that the coefficient of $\lvert \mu_{w_0} \rangle$ is a polynomial in $v$ with positive exponents, and so cannot be a polynomial in $v+v^{-1}$.	The $e$-regular multipartitions appearing in the path $2^{u}1^{j_1}2^{j_2-u}$ all have   $t=\min(a_1,j_1)$, $x=\max(0, j_1-a_1)$
				for any of the $w$ in that range, so $t,x$ are the same for $w_0$ and $w$, as are $T$ and $X$. However, we do have different $W_0$ and $S_0$.  
				We calculate the exponent of $\mu_{w_0}$ using the 
				formula in (\ref{3}).
				\[
				v^{E(T,W_0)}
				\sum_{X\subset U\subset S_0}v^{E(S_0,U,X)}\lvert \tau(T,W_0,S_0,X)>.
				\] 
				By the rules for choosing $i$-cogood nodes, we have 
				$\Inv(T)=\Inv(W_0,T)=0$ for an $e$-regular multipartition because we add nodes, first $1$-nodes to form $T$ and then $2$-nodes to form $w$,  from the beginning. We also have  $E_X(t)=(a_1-t)(j_1-t)=0$ because $t$ equals either $a_1$ or $j_1$. Thus 
				$E(T,W_0)=E_S(t,w_0)=(t-w_0)(j_2-u-w_0)\geq 0$.
				Now $\#1(S_0)=j_2-w_0>j_2-w \geq u$, so $j_2-u-w_0>0$. Furthermore, $t \geq w >w_0$, so 
				$t-w_0$ is also positive.  Thus $E_S(t)>0$.
				
				We now consider $U$ with $\#1(U)=u_0$ such that 
				$X \subseteq U \subseteq S_0$. We have 
				$E(S_0,U,X)={\Inv(U)}+\Inv(X,U)+\Inv(S_0,U,X)$.
				Because $\mu_{w_0}$ is $e$-regular, all the partitions $(1,1)$ are at the beginning of $U$, so $\Inv(x,U)=0$. 
				
				If we can show that 
				$E_{S_0}(t)+E(S_0,U,X)$ is always positive, we have shown that 
				$\lvert \tau(T, W_0, S_0, X)>$ does not occur as the leading element in a canonical basis elements multiplied by a polynomial in $v+v^{-1}$.
				The number $\Inv(S, U, X)$ can be negative, but it cannot be less than $-(u-x)(j_2-u-w_0)$, which minimum occurs when all the $u-x$ digits $1$of $U-X$, each adding $-1$ to $\Inv(S, U, X)$ for each digit $1$ of $S_0-U$, are before all the $j_2-u-w_0$  digits $1$ of
				$S_0-U$.  Thus
				
				\[
				E_{S_0}(t)+E(S_0,U,X) \geq (t-w_0)(j_2-u-w_0)-(u-x)(j_2-u-w_0)
				\]
				\[		
				=[(t-w_0)-(u-x)](j_2-u-w_0).		
				\]
				We showed above that $j_2-u-w_0>0$, so we need only show that $t-w_0>u-x$.
				Then $u=\min(j_1-w,j_2-w)\leq j_1-w$.  Substituting 
				$j_1=t+x$, we have $u-x \leq t-w < t-w_0$. 
				Thus 
				$E_{S_0}(t)+E(S_0,U,X) >0$, as desired.
				
				The same arguments show that for $w$, 
				the binary word $U$ which has all the 
				digits $1$ at the beginning has
				$E_{S}(t)+E(S,U,X)=0$.  By the choice of $U$, we have $\Inv(U)=0$.
				We still have $\Inv(X,U)=0$. Finally,
				$E(S,U,X)=(-1)(u-x)(j_2-u-w)$, while
				$E_S(t)=(t-w)(j_2-u-w)$. There are two
				cases. If $j_2<j_1$, then $u=j_2-w$,
				so both $E(S,U,X)$ and $E_S(t)$ are zero. If $j_2 \geq j_1$, then 
				$u=j_1-w$, and substituting 
				$j_1=t+x$ gives $u-x=t-w$, so
				$E_{S}(t)+E(S,U,X)=0$ and the coefficient of $\lvert \mu_w \rangle$ for this binary word $U$ is $1$.  
				
			\end{enumerate} 
		\end{proof}

		\begin{examp}
			For $e=4$, we calculate the canonical basis elements for a path $2^11^22^2$ with $a_1=2, a_2=3$, so that $j_1=2,j_2=3,w_1=1$. The output, from the program fock\_space, uses ``q'' for ``v''.
			\begin{itemize}
				\item $G([[], [], [1], [], []])=|[], [], [1], [], []> + q*|[], [], [], [1], []> + q^2*|[], [], [], [], [1]>$
				
				$f(z)=1+z+z^2$.
				\item $G([[1], [1], [1], [], []] )=
				\left(|[1], [1], [1], [], []> + q*|[1], [1], [], [1], []> + q^2*|[1], [1], [], [], [1]>\right) + q*|[1], [], [1, 1], [], []> + q^2*|[1], [], [], [1, 1], []> + q^3*|[1], [], [], [], [1, 1]> + q^2*|[], [1], [1, 1], [], []> + q^3*|[], [1], [], [1, 1], []> + q^4*|[], [1], [], [], [1, 1]>$.
				
				\noindent The generating function of the shape is 
				$f(z)g(z)=(1+z+z^2)^2= 1+2z+3z^2+2z^3+z^4$
				with $g(z)=g_0(z)+g_1(z)$,
				where
				$g_0(z)=1$ and $g_1(z)=z \cdot \binom{2}{1}=z+z^2$.
				\item $ G([[2], [1], [1], [1], []])=(|[2], [1], [1], [1], []> + 
				q*|[2], [1], [1], [], [1]> + q*|[1], [2], [1], [1], []> + 
				q^2*|[2], [1], [], [1], [1]> +  
				q^2*|[1], [2], [1], [], [1]> 
				q^3*|[1], [2], [], [1], [1]>) + 
				(q^4+q^2)*|[1], [1], [1], [1], [1]> + 
				q^2*|[2], [], [1, 1], [], [1]> + q^2*|[2], [], [1], [1, 1], []> +
				(q*|[2], [], [1, 1], [1], []> + q^2*|[], [2], [1, 1], [1], []> +
				q^3*|[2], [], [1], [], [1, 1]> +q^3*|[], [2], [1, 1], [], [1]> + q^3*|[], [2], [1], [1, 1], []> + q^3*|[2], [], [], [1, 1], [1]>  +
				q^4*|[2], [], [], [1], [1, 1]> q^4*|[], [2], [], [1, 1], [1]> +q^4*|[], [2], [1], [], [1, 1]> + q^5*|[], [2], [], [1], [1, 1]>) +
				( q^3*|[1], [], [1, 1], [1], [1]> +
				q^4*|[], [1], [1, 1], [1], [1]> +q^4*|[1], [], [1], [1, 1], [1]> +
				q^5*|[1], [], [1], [1], [1, 1]> +q^5*|[], [1], [1], [1, 1], [1]> + 
				q^6*|[], [1], [1], [1], [1, 1]>)$.

				For a choice of $t,w$, let $h_{t,w}$ be the corresponding summand of the generating function.  Then
				
				$h_{2,1}=(1+z)(1+z+z^2)=1+2z+2z^2+z^3$,
				
				$h_{2,0}=z^2+z^4$,
				
				$h_{1,1}=(1+z)^2z(1+z+z^2)=z+3z^2+4z^3+3z^4+z^5$,
				
				$h_{1,0}=(1+z)z^3(1+z+z^2)=z^3+2z^4+2z^5+z^6$.
				
				The polynomials $h_{2,0}$ and $h_{1,1}$ are self-dual with respect to the midpoint $z^3$ while $h_{2,1}$ and $h_{1,0}$ are dual to each other. Adding them all together we have a symmetric generating function.
				
				$1+3z+6z^2+6z^3+       6z^4 +3z^5+z^6$.		
			\end{itemize}	
		\end{examp}
		
		\begin{examp}
			We give an example to illustrate the case where a path produces a Fock space element which is a non-integer combination of canonical basis elements, with $a_1=3$, $a_2=5$. The path is $2^31^42^3$, of defect $14$, and produces a Fock space element $M$ with
			\[
			M=G([[2],[2],[2],[1,1][1],[1],[],[]])+(v+v^{-1})G([[2],[2],[1],[1,1][1],[1],[1],[]])
			\]
			\[
			+G([[2],[1],[1],[1,1][1],[1],[1],[1]]),
			\]
			\noindent where the last of the three canonical basis elements $G(\mu_1)$ remains when the first two, $G(\mu_3)$ and $(v+v^{-1})G(\mu_2)$, are stripped off.  The first can be reached by a path with $u=1$ and the second by a path with $u=2$.
		\end{examp}
		\section{OPEN PROBLEMS}
		This treatment leaves several open procblems:
		\begin{enumerate}
			\item If we project $\max(\Lambda)$ onto its hubs, we get a subset of 
			$\mathbb Z^\ell$ which is a union of slices.  These slices are not all isomorphic to faces. When $e=3$, the case we calculated most intensively, there was an infinite sequence of slices which are were isomorphic to a highest weight module for the Weyl group $\mathring{W}$, with dominant integral weights whose coefficients in $\mathbb{Z}^\ell$ were $(a,b)$, $(a+1,b+1)$,$(a+2,b+2),\dots$.
			isomorphic to faces and thus isomorphic to  highest weight modules.
			
			\item We conjecture that there is a duality between the canonical basis elements throughout the face and thus from a calculation point of view it is necessary to calculate the canonical basis element only for the upper half of the face. Under this duality, a natural basis element $|\mu \rangle$  with coefficient $v^a$ goes to a multipartition in which
			for each residue $i$ added from the top down to form $\mu$, a residue $t-i$ is removed from the 
			corresponding multipartition of $\rho$, with position chosen from the bottom up, and the multipartition is multiplied by $v^{(d-a)}$, where $d$ is the defect of the 
			weight of $\mu$. 
			
			\item The elements of a face all lie in $\max(\Lambda)$ since the content $c_j$ for $j \notin I_0$ is zero. If, in addition, $a_0=0$ and $I_0=I-\{0\}$, then the entire face is a fundamental region for $\max(\Lambda)$ under the action of the abelian subgroup $T$, and any canonical basis elements calculated for the face are readily calculated, with the same shape, throughout $\max(\Lambda)$.  Even when $a_0 \neq 0$, perhaps the canonical basis elements for the slices containing dominant integral weights can be readily calculated.  	
		\end{enumerate}
		
		For anyone interested in tackling one of these questions, we would be happy to provide the programs we use to calculate canonical basis elements and multipartitions, which are based on the SageMath \cite{SM} program ``fock\_space'' by Travis Scrimshaw. We do not intend to pursue any of these problems right now and believe that there is enough material here for another Ph.D. thesis.

	\end{document}